\pgfplotsset{compat=1.16}
\newcommand*{\includetikzgraphics}[2][]{%
  \includegraphics[#1]{#2}
}
\title{
  A Levenberg-Marquardt Method for Nonsmooth Regularized Least Squares%
}
\author{
  Aleksandr Y. Aravkin%
  \thanks{%
    Department of Applied Mathematics,
    University of Washington,
    Seattle WA., USA\@.
    E-mail: \mailto{saravkin@uw.edu}.
    % Research partially supported by XYZ\@.
  }
  \and
  Robert Baraldi%
  \thanks{%
    Optimization and Uncertainty Quantification,
    Sandia National Laboratories,
    P.O. Box 5800, Albuquerque, NM, 87125, USA\@.
    E-mail: \mailto{rjbaral@sandia.gov}.
    This research was sponsored by the Department of Energy
    Office of Science, Office of Advanced Scientific Computing Research's
    John von Neumann Fellowship.
    Sandia National Laboratories is a multimission laboratory
    managed and operated by National Technology and Engineering
    Solutions of Sandia, LLC., a wholly owned subsidiary of
    Honeywell International, Inc., for the U.S.\ Department of
    Energy’s National Nuclear Security Administration under
    contract DE-NA0003525.
    This paper describes objective technical results and analysis. Any
    subjective views or opinions that might be expressed in the paper
    do not necessarily represent the views of the U.S.\ Department of
    Energy or the United States Government.\@.
  }
  \and
  Dominique Orban%
  \thanks{%
    GERAD and Department of Mathematics and Industrial Engineering,
    Polytechnique Montr\'eal, QC, Canada.
    E-mail: \mailto{dominique.orban@gerad.ca}.
    Research partially supported by an NSERC Discovery Grant.
  }
}
\date{\today}
\newtheorem{problemassumption}{Problem Assumption}[section]
\newtheorem{modelassumption}{Model Assumption}[section]
\newtheorem{stepassumption}{Step Assumption}[section]
\crefname{problemassumption}{Problem Assumption}{Problem Assumptions}
\Crefname{problemassumption}{Problem Assumption}{Problem Assumptions}
\crefname{modelassumption}{Model Assumption}{Model Assumptions}
\Crefname{modelassumption}{Model Assumption}{Model Assumptions}
\crefname{stepassumption}{Step Assumption}{Step Assumptions}
\Crefname{stepassumption}{Step Assumption}{Step Assumptions}
\Crefname{subsection}{Section}{Sections}
\crefname{subsection}{section}{sections}
\newcommand{\proj}[1]{\mathop{\textup{proj}}_{#1}}
\newcommand{\dom}{\mathop{\textup{dom}}}
\newcommand{\interior}{\mathop{\textup{int}}}
\newcommand{\B}{\mathbb{B}}
\begin{document}

  % \linenumbers
  \maketitle

  \thispagestyle{firstpage}
  \pagestyle{myheadings}

  \begin{abstract}
    We develop a Levenberg-Marquardt method for minimizing the sum of a smooth nonlinear least-squares term \(f(x) = \tfrac{1}{2} \|F(x)\|_2^2\) and a nonsmooth term \(h\).
    Both \(f\) and \(h\) may be nonconvex.
    Steps are computed by minimizing the sum of a regularized linear least-squares model and a model of \(h\) using a first-order method such as the proximal gradient method.
    We establish global convergence to a first-order stationary point of both a trust-region and a regularization variant of the Levenberg-Marquardt method under the assumptions that \(F\) and its Jacobian are Lipschitz continuous and \(h\) is proper and lower semi-continuous.
    In the worst case, both methods perform \(O(\epsilon^{-2})\) iterations to bring a measure of stationarity below \(\epsilon \in (0, 1)\).
    We report numerical results on three examples: a group-lasso basis-pursuit denoise example, a nonlinear support vector machine, and parameter estimation in neuron firing.
    For those examples to be implementable, we describe in detail how to evaluate proximal operators for separable \(h\) and for the group lasso with trust-region constraint.
    In all cases, the Levenberg-Marquardt methods perform fewer outer iterations than a proximal-gradient method with adaptive step length and a quasi-Newton trust-region method, neither of which exploit the least-squares structure of the problem.
    Our results also highlight the need for more sophisticated subproblem solvers than simple first-order methods.
  \end{abstract}

  \begin{keywords}
    Regularized optimization, nonsmooth optimization, nonconvex optimization, nonlinear least squares, Levenberg-Marquardt method, proximal gradient method.
  \end{keywords}

  \begin{AMS}
    49J52,  % Nonsmooth analysis
    65K10,  % Numerical optimization and variational techniques
    90C53,  % Methods of quasi-Newton type
    90C56,  % Derivative-free methods and methods using generalized derivatives
  \end{AMS}

  %!TEX root = nls-composite.tex
\section{Introduction}

We consider the problem
\begin{equation}
  \label{eq:nlls}
  \minimize{x} \ f(x) + h(x),
  \qquad
  f(x) = \tfrac{1}{2} \|F(x)\|_2^2,
\end{equation}
where \(F: \R^n \to \R^m\) is continuously differentiable and \(h: \R^n \to \R\) is proper and lower semi-continuous; we allow $h$ to be nonsmooth and nonconvex.
In practice, $f$ is often a data-misfit term while $h$ is a regularizer designed to promote desirable properties in the solution, such as sparsity.
Numerous applications investigated in the nonsmooth regularized optimization literature actually have the structure~\eqref{eq:nlls}, including basis pursuit denoising \citep{donoho2006compressed,tibshirani1996regression}, sparse factorization and dictionary learning \citep{bach-jenatton-mairal-obozinski-2011}, and sparse total least squares \citep{zhu-leus-giannakis-2011}.
Yet nonsmooth numerical methods do not exploit the least-squares structure, nor accommodate general nonsmooth regularizers.
% \smarttodo{Fair to say? - couldn't find anything.}

We describe two methods for \eqref{eq:nlls}: a quadratic regularization variant and trust-region variant inspired by the method of \citet{levenberg-1944} and \citet{marquardt-1963}, denoted {\tt LM} and {\tt LMTR} respectively.
Steps are computed by approximately minimizing simpler nonsmooth iteration-dependent Gauss-Newton-type models.
Our algorithmic realizations utilize first-order methods, such as the proximal gradient method or the quadratic regularization method of \citet{aravkin-baraldi-orban-2021}, to solve the subproblems.
The trust-region approach allows for any arbitrary trust-region norm, which, in practice, is influenced by nonconvex subproblem tractibility.
% Similarly to the nonsmooth context of \citet{aravkin-baraldi-orban-2021}, we utilize one iteration of proximal gradient descent to serve the same role as the gradient in smooth optimization.
For both algorithms, we establish global convergence in terms of an optimality measure describing achievable decrease by a single proximal gradient step.
Additionally, we derive a worst-case complexity bound of $\mathcal{O}(1/\epsilon^2)$ iterations to bring the stationarity measure below a tolerance of $\epsilon\in(0,1)$ for {\tt LM} and {\tt LMTR}, i.e., the presence of a nonsmooth term in the objective yields a complexity bound of the same order as in the smooth case.

We provide implementation details and illustrate the performance of our methods on several numerical examples, including
basis pursuit denoise with group-lasso regularization,
nonlinear support vector machine with $\ell_{1/2}^{1/2}$-norm regularization,
and a sparse parameter estimation example taken from the Fitzhugh-Nagumo model of neuron firing.
Our methods exhibit favorable performance under certain conditions with respect to previous work \citet{aravkin-baraldi-orban-2021}.
We additionally provide efficient, open-source software implementations of {\tt LM} and {\tt LMTR} as a package in the  Julia language \cite{baraldi-orban-regularized-optimization-2022}.
We find that exploiting the least-squares structure yields few {\tt LM} and {\tt LMTR} outer iterations, a well-known benefit in smooth optimization.
The cost incurred is a large number of inner iterations, i.e, spent solving the subproblem.
Thus, the results highlight the need for more sophisticated methods to minimize the sum of a linear least-squares term and a nonsmooth regularizer.

\subsection*{Related research}

The present research is based on the framework laid out by \citet*{aravkin-baraldi-orban-2021}.
The convergence and complexity of our trust-region Levenberg-Marquardt implementation follow directly from the general results of \citep{aravkin-baraldi-orban-2021}.
To the best of our knowledge, the trust-region literature does not explicitly cover the case of a nonlinear least-squares smooth objective with a nonsmooth regularizer other than a penalty term even though numerous applications exhibit that structure.
See \citep{conn-gould-toint-2000} for background and an extensive treatment.

A large portion of the literature focuses on \(h\) convex and/or globally Lipschitz continuous, e.g., \citet{cartis-gould-toint-2011,grapiglia-yuan-yuan-2016} and references therein.
We do not attempt to give a comprehensive account of that literature here as we focus on significantly weaker assumptions.
While many methods exist in the first-order literature, e.g., \citep{combettes2011proximal}, few can effectively utilize any significant curvature information.
Proximal Newton methods \citep{lee-sun-saunders-2014} require solutions to nontrivial proximal operators and positive semi-definiteness of the Hessian.
The small number of references that allow both \(f\) and \(h\) to be nonconvex that we are aware of include:
\citet{li-lin-2015}, who design accelerations of the proximal gradient method under the assumption that \(f + h\) is coercive;
\citet{palm} who design an alterating method for cases where \(h(x) = h_1(x_1) + h_2(x_2)\) and \((x_1, x_2)\) is a partition of \(x\);
\citet{stella-themelis-sopasakis-patrinos-2017} who propose a linesearch limited-memory BFGS method named PANOC;
\citet{themelis-stella-patrinos-2017} who propose a nonmonotone linesearch proximal quasi-Newton method named ZeroFPR based on the forward-backward envelope;
and \citet{bot-csetnek-laszlo-2016}, who study a proximal method with momentum.
The last three converge if $f+h$ satisfies the Kurdyka-\L ojasiewicz (K\L) assumption.
Moreover, while all include~\eqref{eq:nlls} as a special case, few exploit any curvature information and none are specific to the least-squares structure.
The algorithms presented here, like those of \citep{aravkin-baraldi-orban-2021}, require no such coercivity or K\L\, assumptions.

\subsection*{Notation}

We use \(\|\cdot\|\) to represent a generic, but fixed, norm on \(\R^n\) or \(\R^m\).
The unit ball defined by that norm is \(\B\), and \(x + \Delta \B\) is the ball centered at \(x\) of radius \(\Delta > 0\).
For an integer \(q \geq 1\), \(\|\cdot\|_q\) is the \(\ell_q\)-norm and \(\B_q\) is the unit ball in the \(\ell_q\)-norm.
If \(A \subseteq \R^n\), \(\chi(\cdot \mid A)\) is the indicator of \(A\), i.e., the function whose value is \(0\) if \(x \in A\) and \(+\infty\) otherwise.
Unless otherwise noted, if \(A\) is a matrix, \(\|A\|\) denotes the spectral norm of \(A\), i.e., its largest singular value.
We use \(J(x):\R^n \to \R^{n\times m}\) to denote the Jacobian of \(F\) at \(x\).

  %!TEX root = nls-composite.tex
\section{Background}%
\label{sec:background}

\begin{definition}[Limiting subdifferential]
  Consider \(\phi: \R^n \rightarrow \overline{\R}\) and \(\bar{x} \in \R^n\) with \(\phi(\bar{x}) < \infty\).
  We say that \(v \in \R^n\) is a \emph{regular subgradient} of \(\phi\) at \(\bar{x}\), and we write \(v \in \hat \partial \phi(\bar{x})\) if
  \[
    % \phi(x) \geq \phi(\bar{x}) + \langle v, x - \bar{x} \rangle + o(\|x-\bar{x}\|)
    \liminf_{x \to \bar{x}} \, \frac{ \phi(x) - \phi(\bar{x}) - v^T (x - \bar{x}) }{ \|x - \bar{x}\|_2 } \geq 0.
  \]
  The set of regular subgradients is also called the \emph{Fr\'echet subdifferential}.
  We say that \(v\) is a \emph{general subgradient} of \(\phi\) at \(\bar{x}\), and we write \(v \in \partial \phi(\bar{x})\), if there are sequences \(\{x_k\}\) and \(\{v_k\}\) such that
  \[
    x_k \to \bar{x}, \quad \phi(x_k) \to \phi(\bar{x}), \quad v_k \in \hat \partial \phi(x^k) \text{ and } \ v^k \to v.
  \]
  The set of general subgradients is called the \emph{limiting subdifferential}.
\end{definition}

\begin{proposition}[{\protect \citealp[Theorem 10.1]{rtrw}}]%
  \label{prop:rtrw}
  If \(\phi: \R^n \to \overline{\R}\) is proper and has a local minimum at \(\bar{x}\), then \(0 \in \hat\partial \phi(\bar{x}) \subseteq \partial \phi(\bar{x})\).
  If \(\phi\) is convex, the latter condition is also sufficient for \(\bar{x}\) to be a global minimum.
  If \(\phi = f + h\) where \(f\) is continuously differentiable on a neighborhood of \(\bar{x}\) and \(h\) is finite at \(\bar{x}\), then  \(\partial \phi(\bar{x}) = \nabla f(\bar{x}) + \partial h(\bar{x})\).
\end{proposition}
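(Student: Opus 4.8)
The plan is to treat the three assertions separately, since each rests on a different property of the limiting subdifferential, and to prove the regular (Fréchet) version first in each case before passing to the limit.

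For the first assertion (Fermat's rule), I would argue directly from the definition. Since $\phi$ is proper and has a local minimum at $\bar{x}$, we have $\phi(x) - \phi(\bar{x}) \geq 0$ for all $x$ in a neighborhood of $\bar{x}$; taking $v = 0$, the numerator of the quotient defining $\hat\partial\phi(\bar{x})$ is nonnegative and the denominator $\|x - \bar{x}\|_2$ is positive, so the quotient is nonnegative near $\bar{x}$ and its liminf is $\geq 0$. Hence $0 \in \hat\partial\phi(\bar{x})$. The inclusion $\hat\partial\phi(\bar{x}) \subseteq \partial\phi(\bar{x})$ is then immediate from the definition of the limiting subdifferential, using the constant sequences $x_k = \bar{x}$ and $v_k = v$.

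For the second assertion, I would first show that when $\phi$ is convex every regular subgradient is a global subgradient, i.e., $v \in \hat\partial\phi(x_0)$ implies $\phi(x) \geq \phi(x_0) + v^T(x - x_0)$ for all $x$. This uses the monotonicity of the difference quotients $t \mapsto (\phi(x_0 + td) - \phi(x_0))/t$ along each direction $d$: the regular subgradient inequality forces the one-sided directional derivative $\phi'(x_0; d)$ to dominate $v^T d$, while convexity forces $\phi'(x_0; d) \leq \phi(x_0 + d) - \phi(x_0)$, and chaining these gives the global inequality. Applying this along the sequences witnessing $0 \in \partial\phi(\bar{x})$ and passing to the limit then yields $\phi(x) \geq \phi(\bar{x})$ for all $x$, so $\bar{x}$ is a global minimizer.

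The third assertion is the exact sum rule, and this is where I expect the real work to lie. The key observation is that, because $f$ is $C^1$ near $\bar{x}$, adding it merely shifts the regular subdifferential: writing the difference quotient for $f + h$ and subtracting $\nabla f(\bar{x})^T(x - \bar{x})$, the smooth remainder $(f(x) - f(\bar{x}) - \nabla f(\bar{x})^T(x - \bar{x}))/\|x - \bar{x}\|_2 \to 0$, so $v \in \hat\partial(f+h)(\bar{x})$ if and only if $v - \nabla f(\bar{x}) \in \hat\partial h(\bar{x})$; that is, $\hat\partial(f+h) = \nabla f + \hat\partial h$ exactly. To promote this to the limiting subdifferential I would take $v \in \partial(f+h)(\bar{x})$ with witnessing sequences $x_k \to \bar{x}$, $(f+h)(x_k) \to (f+h)(\bar{x})$, $v_k \in \hat\partial(f+h)(x_k)$, $v_k \to v$. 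Continuity of $f$ gives $h(x_k) \to h(\bar{x})$, the Fréchet sum rule at each $x_k$ gives $v_k - \nabla f(x_k) \in \hat\partial h(x_k)$, and continuity of $\nabla f$ gives $v_k - \nabla f(x_k) \to v - \nabla f(\bar{x})$, whence $v - \nabla f(\bar{x}) \in \partial h(\bar{x})$. The reverse inclusion is symmetric, obtained by adding $\nabla f(x_k)$ to a sequence witnessing $w \in \partial h(\bar{x})$. The main obstacle is precisely this limiting passage: unlike the general nonsmooth sum rule, which yields only an inclusion, exactness survives here only because the smooth summand contributes a \emph{continuous} gradient, so one must verify carefully both the convergence of the shifted sequences and their membership in the appropriate regular subdifferentials.
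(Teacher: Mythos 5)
The paper offers no proof of this proposition: it is quoted verbatim from \citet[Theorem 10.1 and Exercise 8.8]{rtrw}, so there is nothing internal to compare against. Your argument is correct and is essentially the standard proof from that reference — Fermat's rule from the definition of \(\hat\partial\phi\), the convex case via monotone difference quotients and a limit passage, and the exact sum rule by first establishing \(\hat\partial(f+h)(x) = \nabla f(x) + \hat\partial h(x)\) pointwise and then using continuity of \(f\) and \(\nabla f\) to carry the identity through the limiting sequences.
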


If \(0 \in \hat{\partial}\phi(\bar{x})\), we say that \(\bar{x}\) is \emph{first-order stationary} for \(\phi\).
Under our assumptions,
\begin{equation}%
  \label{eq:nlls-stationary}
  x \text{ is first-order stationary for~\eqref{eq:nlls}}
  \quad \Longleftrightarrow \quad
  0 \in J{(x)}^T F(x) + \partial h(x).
\end{equation}

The proximal gradient method \citep{fukushima-mine-1981} applied to a regularized objective \(f(x) + h(x)\) where \(f\) is differentiable is defined by the iteration
\begin{equation}%
  \label{eq:prox-iteration}
  x_{k+1} \in \prox{\nu h}(x_k - \nu \nabla f(x_k)) \qquad (k \geq 0),
\end{equation}
where \(\nu > 0\) is a steplength and the \emph{proximal operator} is defined as
\begin{equation}%
  \label{eq:def-prox}
  \prox{\nu h}(y) := \argmin{u} \tfrac{1}{2} \|u - y\|_2^2 + \nu h(u).
\end{equation}
Without further assumptions on \(h\),~\eqref{eq:def-prox} is a set that may be empty, or contain one or more elements.
The iteration~\eqref{eq:prox-iteration} has the following descent property

\begin{lemma}[{\protect \citealp[Lemma~\(2\)]{palm}}]%
  \label{lem:prox-decrease}
  Let \(\nabla f\) be Lipschitz continuous with Lipschitz constant \(L \geq 0\), \(h\) be proper lower semi-continuous and \(\inf h > -\infty\).
  Let \(x_k \in \dom h\), \(0 < \nu < 1/L\), and \(x_{k+1}\) be defined according to~\eqref{eq:prox-iteration}.
  Then,
  \begin{equation}%
    \label{eq:prox-decrease}
    (f+h)(x_{k+1}) \leq
    (f+h)(x_k) - \tfrac{1}{2} (\nu^{-1} - L) \|x_{k+1} - x_k\|_2^2.
  \end{equation}
\end{lemma}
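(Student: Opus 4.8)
The plan is to combine two standard ingredients: the quadratic upper bound on \(f\) coming from Lipschitz continuity of \(\nabla f\), and the defining optimality of the proximal step. Since neither coercivity nor convexity of \(h\) is assumed, I would first note that the hypothesis \(\inf h > -\infty\) together with the quadratic term in~\eqref{eq:def-prox} makes the proximal objective coercive and bounded below, so \(\prox{\nu h}(x_k - \nu \nabla f(x_k))\) is nonempty and \(x_{k+1}\) is a genuine global minimizer of \(u \mapsto \tfrac{1}{2}\|u - (x_k - \nu \nabla f(x_k))\|_2^2 + \nu h(u)\). This is all I need from the proximal operator; I do not require uniqueness.

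First I would write down the descent inequality for the smooth part. Because \(\nabla f\) is \(L\)-Lipschitz,
\begin{equation}
  \label{eq:plan-descent}
  f(x_{k+1}) \leq f(x_k) + \nabla f(x_k)^T (x_{k+1} - x_k) + \tfrac{1}{2} L \|x_{k+1} - x_k\|_2^2 .
\end{equation}
Next I would exploit that \(x_{k+1}\) minimizes the proximal subproblem by comparing its objective value against the value at the feasible point \(u = x_k\):
\begin{equation}
  \label{eq:plan-opt}
  \tfrac{1}{2}\|x_{k+1} - x_k + \nu \nabla f(x_k)\|_2^2 + \nu h(x_{k+1})
  \leq \tfrac{1}{2}\|\nu \nabla f(x_k)\|_2^2 + \nu h(x_k).
\end{equation}

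The decisive algebraic step is to expand the square on the left of~\eqref{eq:plan-opt}, which produces a cross term \(\nu \nabla f(x_k)^T(x_{k+1}-x_k)\), a term \(\tfrac{1}{2}\|x_{k+1}-x_k\|_2^2\), and a copy of \(\tfrac{1}{2}\nu^2\|\nabla f(x_k)\|_2^2\) that cancels against the right-hand side. After dividing through by \(\nu > 0\), this rearranges to
\begin{equation}
  \label{eq:plan-h}
  \nabla f(x_k)^T (x_{k+1} - x_k) + h(x_{k+1})
  \leq h(x_k) - \tfrac{1}{2} \nu^{-1} \|x_{k+1} - x_k\|_2^2 .
\end{equation}
Substituting~\eqref{eq:plan-h} into~\eqref{eq:plan-descent} to eliminate the inner product and the value of \(h\) at \(x_{k+1}\) then yields \((f+h)(x_{k+1}) \leq (f+h)(x_k) - \tfrac{1}{2}(\nu^{-1} - L)\|x_{k+1}-x_k\|_2^2\), which is exactly~\eqref{eq:prox-decrease}; the condition \(\nu < 1/L\) guarantees the coefficient \(\nu^{-1} - L\) is positive so that the right-hand side is a genuine decrease.

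I expect the only subtle point, rather than a genuine obstacle, to be the justification that \(x_{k+1}\) is a true minimizer despite \(h\) being merely proper, lower semi-continuous, and possibly nonconvex: this is where \(\inf h > -\infty\) is used, ensuring existence via coercivity of the quadratic-plus-\(h\) subproblem. Everything downstream is the cancellation in~\eqref{eq:plan-opt}, which is routine once the comparison with \(u = x_k\) is set up correctly.
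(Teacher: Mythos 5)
Your proof is correct and is the standard argument: the descent lemma for the \(L\)-smooth part combined with the comparison of the proximal objective at \(x_{k+1}\) against \(u = x_k\), with the \(\tfrac{1}{2}\nu^2\|\nabla f(x_k)\|_2^2\) terms cancelling. The paper does not prove this lemma itself but cites it from the reference \citep[Lemma~2]{palm}, whose proof proceeds exactly as yours (including the use of \(\inf h > -\infty\) to guarantee the prox set is nonempty), so there is nothing to reconcile.
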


  \section{Linear Least Squares}%
\label{sec:lls}

For fixed \(\sigma \geq 0\) and \(x \in \R^n\), define
\begin{subequations}
  \begin{align}
    \varphi(s; x) & := \tfrac{1}{2} \|J(x) s + F(x)\|_2^2,
    \label{eq:lls-gn} \\
    \psi(s; x) & \approx h(x + s) \quad  \text{with} \quad \psi(0; x) = h(x),
    \label{eq:lls-psi} \\
    m(s; x, \sigma) & := \varphi(s; x) + \tfrac{1}{2} \sigma \|s\|_2^2 + \psi(s; x).
    \label{eq:lls-lm-regularized}
  \end{align}
\end{subequations}
Consider the parametric problem and its optimal set
\begin{subequations}%
  \label{eq:lls-parametric}
  \begin{align}
    \label{eq:lls-parametric-val}
    p(x, \sigma) & := \min_{s} \ m(s; x, \sigma) \leq \varphi(0; x) + \psi(0; x) = f(x) + h(x) \\
    \label{eq:lls-parametric-opt}
    P(x, \sigma) & := \argmin{s} \ m(s; x, \sigma).
  \end{align}
\end{subequations}
The form of~\eqref{eq:lls-parametric} is representative of a Levenberg-Marquardt subproblem for~\eqref{eq:nlls} in which \(f\) and \(h\) are modeled separately.

In particular, \(\varphi(0; x) = f(x)\) and \(\nabla_s \varphi(0; x) = \nabla f(x)\).
We make the following additional assumption.
\begin{modelassumption}%
  \label{asm:parametric}
  For any \(x \in \R^n\), \(\psi(\cdot; x)\) is proper, lsc and prox-bounded, i.e., there exists \(\lambda_x \in \R_+ \cup \{+\infty\}\) such that \(\psi(\cdot; x) + \tfrac{1}{2} \lambda_x^{-1} \|\cdot\|_2^2\) is bounded below.
  In addition, \(\psi(0; x) = h(x)\), and \(\partial \psi(0; x) = \partial h(x)\).
\end{modelassumption}

In \Cref{asm:parametric}, we assume that our choice of \(\lambda_x\) is the supremum of all possible choices, and we refer to it as the \emph{threshold of prox-boundedness} of \(\psi(\cdot; x)\).
In particular, \(\psi(\cdot; x)\) is bounded below if and only if \(\lambda_x = +\infty\).

By \Cref{prop:rtrw}, if \(\sigma \geq \lambda_x^{-1}\),
\[
  s \in P(x, \sigma) \quad \Longrightarrow \quad 0 \in  \nabla \varphi(s; x) + \sigma s + \partial \psi(s; x).
\]
We define
\begin{equation}
  \label{eq:def-xi}
  \xi(x, \sigma) := (f+h)(x) - p(x, \sigma).
\end{equation}
The following stationarity criterion follows directly from the definitions above.

\begin{lemma}%
  \label{lem:lm-stationary}
  Let \Cref{asm:parametric} be satisfied and \(\sigma \geq \lambda_x^{-1}\).
  Then \( \xi(x, \sigma) = 0 \Longleftrightarrow 0 \in P(x, \sigma) \Longrightarrow x \) is first-order stationary for~\eqref{eq:nlls}.
  In addition, \(x\) is first-order stationary for~\eqref{eq:nlls} if and only if \(s = 0\) is first-order stationary for~\eqref{eq:lls-lm-regularized}.
\end{lemma}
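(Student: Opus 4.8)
The plan is to reduce all three claims to the behavior of the model \(m(\cdot;x,\sigma)\) at \(s=0\), since nothing here requires more than the definitions together with \Cref{asm:parametric} and \Cref{prop:rtrw}. The key starting identity is \(m(0;x,\sigma) = \varphi(0;x) + \tfrac{1}{2}\sigma\|0\|_2^2 + \psi(0;x) = f(x) + h(x)\), using \(\varphi(0;x)=f(x)\) and \(\psi(0;x)=h(x)\) from \Cref{asm:parametric}. Consequently \(\xi(x,\sigma) = (f+h)(x) - p(x,\sigma) = m(0;x,\sigma) - \min_s m(s;x,\sigma) \geq 0\) by~\eqref{eq:lls-parametric-val}, and this gap vanishes precisely when \(s=0\) attains the minimum of \(m(\cdot;x,\sigma)\), i.e.\ when \(0 \in P(x,\sigma)\). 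This settles the first equivalence \(\xi(x,\sigma)=0 \Longleftrightarrow 0 \in P(x,\sigma)\) with only bookkeeping.

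For the implication \(0 \in P(x,\sigma) \Rightarrow x\) first-order stationary, I would invoke the inclusion already recorded above the lemma: because \(\sigma \geq \lambda_x^{-1}\), every \(s \in P(x,\sigma)\) satisfies \(0 \in \nabla_s\varphi(s;x) + \sigma s + \partial\psi(s;x)\). Taking \(s=0\) and substituting \(\nabla_s\varphi(0;x) = \nabla f(x) = J(x)^T F(x)\) together with \(\partial\psi(0;x) = \partial h(x)\) from \Cref{asm:parametric}, the inclusion becomes \(0 \in J(x)^T F(x) + \partial h(x)\), which is exactly first-order stationarity of \(x\) for~\eqref{eq:nlls} by~\eqref{eq:nlls-stationary}.

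The final equivalence follows from the sum rule of \Cref{prop:rtrw}. Since \(m(\cdot;x,\sigma)\) is the sum of the continuously differentiable map \(\varphi(\cdot;x) + \tfrac{1}{2}\sigma\|\cdot\|_2^2\) and the nonsmooth term \(\psi(\cdot;x)\), its subdifferential at \(s=0\) equals \(\nabla_s\varphi(0;x) + \sigma\cdot 0 + \partial\psi(0;x) = J(x)^T F(x) + \partial h(x)\), which is the very same set as the subdifferential of \(f+h\) at \(x\). Hence \(0\) belongs to one if and only if it belongs to the other, so \(s=0\) is first-order stationary for~\eqref{eq:lls-lm-regularized} if and only if \(x\) is first-order stationary for~\eqref{eq:nlls}.

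The only real care needed --- and the reason the middle claim is a one-way implication while the last is an equivalence --- lies in separating optimality from stationarity. Membership \(0 \in P(x,\sigma)\) asserts that \(s=0\) globally minimizes the (possibly nonconvex) model, which forces stationarity and hence problem-stationarity, but the converse recovers only model-stationarity, not model-optimality. I would also make sure to keep the regular and limiting subdifferentials consistent when applying \Cref{prop:rtrw}, and to note that prox-boundedness with \(\sigma \geq \lambda_x^{-1}\) is what guarantees \(p(x,\sigma)\) is finite, so that \(\xi(x,\sigma)\) and \(P(x,\sigma)\) are well defined.
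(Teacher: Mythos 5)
Your proposal is correct and follows essentially the same route as the paper: the equivalence \(\xi(x,\sigma)=0 \Longleftrightarrow 0 \in P(x,\sigma)\) from the identity \(m(0;x,\sigma)=(f+h)(x)\), and the stationarity claims via the sum rule of \Cref{prop:rtrw} applied to \(\partial m(0;x,\sigma)=\nabla\varphi(0;x)+\partial\psi(0;x)=J(x)^TF(x)+\partial h(x)\). Your added remark on why the middle claim is only a one-way implication (global model optimality implies stationarity but not conversely, absent convexity) is a correct and welcome clarification that the paper leaves implicit.
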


\begin{proof}
  Note first that \( \xi(x, \sigma) = 0 \Longleftrightarrow p(x, \sigma) = (f+h)(x) = \varphi(0; x) + \psi(0; x) \), which occurs if and only if \(0 \in P(x, \sigma)\).
  \Cref{prop:rtrw} then implies \(0 \in \partial m(0; x, \sigma) = \nabla \varphi(0; x) + \partial \psi(0; x)\) and is equivalent to~\eqref{eq:nlls-stationary}.
\end{proof}

The next result states some properties of~\eqref{eq:lls-parametric}.

\begin{proposition}%
  \label{prop:lls-parametric}
  Let \Cref{asm:parametric} be satisfied.
  \(\dom p = \dom P = \dom \psi \times \{\sigma \mid \sigma \geq \lambda_x^{-1}\}\).
  In addition, for any \(x \in \R^n\),
  \begin{enumerate}
    \item\label{itm:p-lsc} \(p(x, \cdot)\) is proper lsc and for each \(\sigma > \lambda_x^{-1}\), \(P(x, \sigma)\) is nonempty and compact;
    \item\label{itm:P-osc} if \(\{\sigma_k\} \to \bar{\sigma} > \lambda_x^{-1}\) in such a way that \(\{p(x, \sigma_k)\} \to p(x, \bar{\sigma})\), and for each \(k\), \(s_k \in P(x, \sigma_k)\), then \(\{s_k\}\) is bounded and all its limit points are in \(P(x, \bar{\sigma})\);
    % \item\label{itm:P-single-valued} if \(\varphi(\cdot; x) + \psi(\cdot; x)\) is strictly convex, \(P(\Delta; x)\) is single-valued;
    \item\label{itm:osc-sufficient} \(p(x, \cdot)\) is continuous at any \(\bar{\sigma} > \lambda_x^{-1}\) and \(\{p(x, \sigma_k)\} \to p(x, \bar{\sigma})\) holds in part~\ref{itm:P-osc} if \(\bar{\sigma} > 0\).
  \end{enumerate}
\end{proposition}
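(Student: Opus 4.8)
The plan is to fix \(x\) and regard \(\Phi(s, \sigma) := m(s; x, \sigma)\) from~\eqref{eq:lls-lm-regularized} as an objective in the variable \(s \in \R^n\) parametrized by \(\sigma\), so that \(p(x, \cdot)\) and \(P(x, \cdot)\) in~\eqref{eq:lls-parametric-val}--\eqref{eq:lls-parametric-opt} are exactly the optimal-value (inf-projection) and argmin mappings of a parametric minimization problem. Parts~\ref{itm:p-lsc} and~\ref{itm:P-osc} are then precisely the conclusions of the standard parametric-minimization result, \citet[Theorem~1.17]{rtrw}, and the proof reduces to checking its hypotheses: that \(\Phi\) is proper and lsc jointly in \((s, \sigma)\), and that \(\Phi(\cdot, \sigma)\) is level-bounded in \(s\) locally uniformly in \(\sigma\) on the range \(\sigma > \lambda_x^{-1}\). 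Properness and joint lower semicontinuity are immediate: \(\varphi(\cdot; x)\) is a finite smooth convex quadratic, \(\tfrac{1}{2} \sigma \|s\|_2^2\) is continuous in \((s, \sigma)\), and \(\psi(\cdot; x)\) is proper and lsc by \Cref{asm:parametric}; moreover \(\psi > -\infty\) while \(\varphi, \tfrac{1}{2}\sigma\|\cdot\|_2^2 \geq 0\) keep \(\Phi > -\infty\) pointwise, and \(\Phi(0, \sigma) = f(x) + h(x)\) is finite for \(x \in \dom \psi\).

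The crux is the level-boundedness, and this is where the strict inequality \(\sigma > \lambda_x^{-1}\) and the threshold of prox-boundedness enter. Fix \(\bar\sigma > \lambda_x^{-1}\) and choose \(\sigma_1, \sigma_0\) with \(\lambda_x^{-1} < \sigma_1 < \sigma_0 < \bar\sigma\). Since \(\sigma_1^{-1} < \lambda_x\), \Cref{asm:parametric} provides \(\beta\) with \(\psi(s; x) + \tfrac{1}{2} \sigma_1 \|s\|_2^2 \geq \beta\) for all \(s\), whence, using \(\varphi \geq 0\), for every \(\sigma \geq \sigma_0\),
\[
  \Phi(s, \sigma) \geq \psi(s; x) + \tfrac{1}{2} \sigma_0 \|s\|_2^2 \geq \beta + \tfrac{1}{2} (\sigma_0 - \sigma_1) \|s\|_2^2 .
\]
The right-hand side is coercive in \(s\) and independent of \(\sigma\) over the neighborhood \([\sigma_0, \infty)\) of \(\bar\sigma\), which is precisely level-boundedness locally uniformly in \(\sigma\). \citet[Theorem~1.17]{rtrw} then yields that \(p(x, \cdot)\) is proper and lsc and that \(P(x, \sigma)\) is nonempty and compact for \(\sigma > \lambda_x^{-1}\) (part~\ref{itm:p-lsc}), together with the boundedness and cluster-point statement of part~\ref{itm:P-osc} under its stated hypothesis \(\{p(x,\sigma_k)\} \to p(x, \bar\sigma)\). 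The same lower bound gives \(p(x, \sigma) > -\infty\) for \(\sigma \geq \lambda_x^{-1}\), which combined with \(p(x, \sigma) \leq f(x) + h(x)\) yields \(\dom\psi \times \{\sigma \geq \lambda_x^{-1}\} \subseteq \dom p\); the reverse inclusion, and the corresponding identity for \(\dom P\), rest on the definition of \(\lambda_x\) as the supremal threshold for \(\psi(\cdot; x)\), modulo the subtlety noted below.

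For part~\ref{itm:osc-sufficient} it remains to upgrade lower semicontinuity to continuity. For each fixed \(s\), the map \(\sigma \mapsto \Phi(s, \sigma)\) is affine, hence continuous, so \(p(x, \cdot) = \inf_s \Phi(s, \cdot)\) is a pointwise infimum of continuous functions and is therefore upper semicontinuous everywhere. Combined with the lower semicontinuity from part~\ref{itm:p-lsc}, this makes \(p(x, \cdot)\) continuous at every \(\bar\sigma > \lambda_x^{-1}\), where it is finite. Continuity in turn forces \(\{p(x, \sigma_k)\} \to p(x, \bar\sigma)\) whenever \(\sigma_k \to \bar\sigma\), so the extra hypothesis invoked in part~\ref{itm:P-osc} is automatic, which is the final assertion. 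I expect the main obstacle to be the uniform level-boundedness argument: one must extract coercivity, not merely boundedness below, from prox-boundedness, and this is possible only because \(\bar\sigma\) is strictly above \(\lambda_x^{-1}\), leaving the positive quadratic margin \(\tfrac{1}{2}(\sigma_0 - \sigma_1)\|s\|_2^2\). Reconciling the exact domain also requires care, since the nonnegative term \(\varphi\) can only improve boundedness below, and one must verify that it does not shift the effective threshold away from \(\lambda_x^{-1}\).
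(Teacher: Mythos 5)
Your proof is correct and follows essentially the same route as the paper: both reduce parts~1--2 to \citet[Theorem~1.17]{rtrw} by showing that \(m(\cdot; x, \sigma)\) is level-bounded in \(s\) locally uniformly in \(\sigma\) using the prox-boundedness of \(\psi(\cdot; x)\), and obtain part~3 from continuity of \(\sigma \mapsto m(s; x, \sigma)\). If anything, your level-boundedness verification (inserting \(\sigma_1\) strictly between \(\lambda_x^{-1}\) and \(\sigma_0\)) is slightly more careful than the paper's, which tacitly uses boundedness below of \(\psi(\cdot;x) + \tfrac{1}{2}\lambda_x^{-1}\|\cdot\|_2^2\) at the threshold itself.
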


\begin{proof}
  Parts~\ref{itm:p-lsc}--\ref{itm:P-osc} follow from applying \citep[Theorem~\(1.17\)]{rtrw} by noting that~\eqref{eq:lls-lm-regularized} is level-bounded in \(s\) locally uniformly in \((x, \sigma)\) because \(\psi(\cdot; x) + \tfrac{1}{2} \lambda_x^{-1} \|s\|_2^2\) is bounded and \(\varphi(s; x) + \tfrac{1}{2} (\sigma - \lambda_x^{-1}) \|s\|_2^2\) is level bounded in \(s\) locally uniformy in \((x, \sigma)\).
  Part~\ref{itm:osc-sufficient} also follows from \citep[Theorem~\(1.17\)]{rtrw} by noting that~\eqref{eq:lls-lm-regularized} is continuous in \(\sigma\) at any \(\bar{\sigma} > \lambda_x^{-1}\).
\end{proof}

By \Cref{prop:lls-parametric} part~\ref{itm:osc-sufficient}, \(\xi(x, \cdot)\) is continuous at any \(\bar{\sigma} > \lambda_x^{-1}\).

Although~\eqref{eq:lls-gn} is a natural model of \(f\) about \(x\), convergence properties may be stated in terms of the simpler first-order model
\begin{subequations}%
  \begin{align}
    \label{eq:lls-lin}
    \varphi_1(s; x) & := f(x) + \nabla f(x)^T s = \tfrac{1}{2} \|F(x)\|_2^2 + {(J(x)^T F(x))}^T s, \\
    \label{eq:lls-lin-regularized}
    m_1(s; x, \sigma) & := \varphi_1(s; x) + \tfrac{1}{2} \sigma \|s\|^2 + \psi(s; x).
  \end{align}
\end{subequations}

The first step of the proximal gradient method~\eqref{eq:prox-iteration} applied to the minimization of both \(\varphi(s; x) + \psi(s; x)\) and \(\varphi_1(s; x) + \psi(s; x)\) with steplength \(\nu > 0\) is
\begin{align}
  \label{eq:def-s1}
  s_1 & \in \prox{\nu \psi(\cdot; x)} (-\nu J{(x)}^T F(x))
  \\  & = \argmin{s} \ \tfrac{1}{2} \|s + \nu J{(x)}^T F(x)\|_2^2 + \nu \psi(s; x) \nonumber
  \\ & = \argmin{s} \ (J{(x)}^T F(x))^T s + \tfrac{1}{2} \nu^{-1} \|s\|_2^2 + \psi(s; x) \nonumber
  \\ & = \argmin{s} \ m_1(s; x, \nu^{-1}). \nonumber
\end{align}

If \(\nu^{-1} \geq \sigma\), then \(m_1(s; x, \sigma) \leq m_1(s; x, \nu^{-1})\).
Therefore, if \(s_1\) results from~\eqref{eq:def-s1}, it also induces decrease in~\eqref{eq:lls-lin-regularized}.

In parallel to \Cref{lem:lm-stationary} and \Cref{prop:lls-parametric}, we may define
\begin{subequations}
  \begin{align}
    \label{eq:lls-lin-parametric-val}
    p_1(x, \sigma) & := \min_s \ m_1(s; x, \sigma) \leq \varphi_1(0; s) + \psi(0; x) = f(x) + h(x) \\
    \label{eq:lls-lin-parametric-opt}
    P_1(x, \sigma) & := \argmin{s} \ m_1(s; x, \sigma), \\
    \label{eq:def-xi1}
    \xi_1(x, \sigma) & := (f + h)(x) - p_1(x, \sigma) \geq 0,
  \end{align}
\end{subequations}
and we have the following results, stating corresponding properties of \(p_1\) and \(\xi_1\).
The proofs replicate those in \Cref{prop:lls-parametric} and \Cref{lem:lm-stationary-1}. 

\begin{lemma}%
  \label{lem:lm-stationary-1}
  Let \Cref{asm:parametric} be satisfied and \(\sigma \geq \lambda_x^{-1}\).
  Then \( \xi_1(x, \sigma) = 0 \Longleftrightarrow 0 \in P_1(x, \sigma) \Longrightarrow x \) is first-order stationary for~\eqref{eq:nlls}.
  In addition, \(x\) is first-order stationary for~\eqref{eq:nlls} if and only if \(s = 0\) is first-order stationary for~\eqref{eq:lls-lin-regularized}.
\end{lemma}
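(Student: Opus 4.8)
The plan is to transcribe the proof of \Cref{lem:lm-stationary} almost verbatim, the point being that replacing the Gauss--Newton model $\varphi$ by its linearization $\varphi_1$ changes nothing at the origin. Indeed, $\varphi_1(0; x) = f(x) = \varphi(0; x)$ and $\nabla_s \varphi_1(0; x) = \nabla f(x) = J{(x)}^T F(x) = \nabla_s \varphi(0; x)$, so the two models agree in both value and gradient at $s = 0$, which is the only information the stationarity analysis ever uses. I would therefore reuse the same three ingredients: the definition of $\xi_1$, the optimality characterization of \Cref{prop:rtrw}, and the subdifferential identity $\partial \psi(0; x) = \partial h(x)$ from \Cref{asm:parametric}.

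First I would prove $\xi_1(x, \sigma) = 0 \Longleftrightarrow 0 \in P_1(x, \sigma)$. Evaluating the model at the origin gives $m_1(0; x, \sigma) = \varphi_1(0; x) + \tfrac{1}{2}\sigma\|0\|^2 + \psi(0; x) = f(x) + h(x) = (f+h)(x)$. Hence $\xi_1(x, \sigma) = (f+h)(x) - p_1(x, \sigma) = m_1(0; x, \sigma) - \min_s m_1(s; x, \sigma)$, a quantity that is nonnegative and vanishes exactly when $s = 0$ attains the minimum of $m_1(\cdot; x, \sigma)$, i.e.\ when $0 \in P_1(x, \sigma)$.

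Next I would obtain the implication to stationarity and the concluding equivalence from a single subdifferential computation. If $0 \in P_1(x, \sigma)$, then \Cref{prop:rtrw} gives $0 \in \hat\partial m_1(0; x, \sigma) \subseteq \partial m_1(0; x, \sigma)$, and its additivity statement yields $\partial m_1(0; x, \sigma) = \nabla_s \varphi_1(0; x) + \partial \psi(0; x) = J{(x)}^T F(x) + \partial h(x)$, using $\nabla f(x) = J{(x)}^T F(x)$ and $\partial \psi(0; x) = \partial h(x)$. Thus $0 \in J{(x)}^T F(x) + \partial h(x)$, which is precisely~\eqref{eq:nlls-stationary}. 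Because this identity is an equality, reading it in both directions delivers the final claim: $0 \in \partial m_1(0; x, \sigma)$ — that is, $s = 0$ is first-order stationary for~\eqref{eq:lls-lin-regularized} — holds if and only if~\eqref{eq:nlls-stationary} holds, i.e.\ if and only if $x$ is first-order stationary for~\eqref{eq:nlls}.

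The one step that requires genuine care, and the main obstacle relative to \Cref{lem:lm-stationary}, is the application of the additivity rule to the quadratic regularizer: unlike~\eqref{eq:lls-lm-regularized}, the model~\eqref{eq:lls-lin-regularized} uses the generic norm $\|\cdot\|$, so $s \mapsto \tfrac{1}{2}\sigma\|s\|^2$ need not be continuously differentiable on a neighborhood of the origin. I would remove this difficulty by observing that, for any norm, $\tfrac{1}{2}\sigma\|s\|^2 = o(\|s\|_2)$ as $s \to 0$, so this term is strictly differentiable at $s = 0$ with zero gradient; absorbing it into the smooth part $\varphi_1$ therefore leaves $\nabla_s \varphi_1(0; x) = J{(x)}^T F(x)$ unchanged and legitimizes the subdifferential sum rule at the origin.
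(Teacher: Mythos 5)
Your proof is correct and follows essentially the same route as the paper, which simply states that the argument of \Cref{lem:lm-stationary} carries over: identify $\xi_1(x,\sigma)=0$ with $0\in P_1(x,\sigma)$ via $m_1(0;x,\sigma)=(f+h)(x)$, then apply \Cref{prop:rtrw} and the additivity of the subdifferential together with $\partial\psi(0;x)=\partial h(x)$ to land on~\eqref{eq:nlls-stationary}. Your extra observation that the generic norm in~\eqref{eq:lls-lin-regularized} forces one to justify the sum rule via strict differentiability of $s\mapsto\tfrac12\sigma\|s\|^2$ at the origin (rather than continuous differentiability on a neighborhood, as literally required by \Cref{prop:rtrw}) is a legitimate refinement that the paper glosses over.
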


\begin{proposition}%
  \label{prop:lls-parametric-1}
  Let \Cref{asm:parametric} be satisfied.
  \(\dom p_1 = \dom P_1 = \dom \psi \times \{\sigma \mid \sigma \geq \lambda_x^{-1}\}\).
  In addition, for any \(x \in \R^n\),
  \begin{enumerate}
    \item\label{itm:p1-lsc} \(p_1(x, \cdot)\) is proper lsc and for each \(\sigma > \lambda_x^{-1}\), \(P_1(x, \sigma)\) is nonempty and compact;
    \item\label{itm:P1-osc} if \(\{\sigma_k\} \to \bar{\sigma} > \lambda_x^{-1}\) in such a way that \(\{p_1(x, \sigma_k)\} \to p_1(x, \bar{\sigma})\), and for each \(k\), \(s_k \in P_1(x, \sigma_k)\), then \(\{s_k\}\) is bounded and all its limit points are in \(P_1(x, \bar{\sigma})\);
    % \item\label{itm:P-single-valued} if \(\varphi(\cdot; x) + \psi(\cdot; x)\) is strictly convex, \(P(\Delta; x)\) is single-valued;
    \item\label{itm:osc-sufficient-1} \(p_1(x, \cdot)\) is continuous at any \(\bar{\sigma} > \lambda_x^{-1}\) and \(\{p_1(x, \sigma_k)\} \to p_1(x, \bar{\sigma})\) holds in part~\ref{itm:P1-osc} if \(\bar{\sigma} > 0\).
  \end{enumerate}
\end{proposition}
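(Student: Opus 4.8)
The plan is to transcribe the proof of \Cref{prop:lls-parametric}, since the two statements differ only by the replacement of the Gauss--Newton term \(\varphi(\cdot; x)\) with its first-order model \(\varphi_1(\cdot; x)\), which is \emph{affine} in \(s\). As in that proof, the workhorse is the parametric-minimization theorem \citep[Theorem~1.17]{rtrw}: it suffices to check that \(m_1(\cdot; x, \sigma)\) is proper, lsc, and level-bounded in \(s\) locally uniformly in \((x, \sigma)\). Granting this, parts~\ref{itm:p1-lsc} and~\ref{itm:P1-osc} are immediate, and the domain identity \(\dom p_1 = \dom P_1 = \dom \psi \times \{\sigma \mid \sigma \geq \lambda_x^{-1}\}\) follows verbatim from the corresponding argument for \(p\) and \(P\).

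First I would establish level-boundedness, the only place where the affine model behaves differently from the quadratic one. Using prox-boundedness from \Cref{asm:parametric}, write \(\psi(s; x) \geq -C - \tfrac{1}{2}\lambda_x^{-1}\|s\|_2^2\) for some constant \(C\), so that
\[
  m_1(s; x, \sigma) \geq f(x) + (J{(x)}^T F(x))^T s + \tfrac{1}{2}\sigma\|s\|^2 - \tfrac{1}{2}\lambda_x^{-1}\|s\|_2^2 - C.
\]
As \(\|s\| \to \infty\) the affine term is dominated by the net quadratic \(\tfrac{1}{2}\sigma\|s\|^2 - \tfrac{1}{2}\lambda_x^{-1}\|s\|_2^2\), which is positive-definite, hence coercive, once \(\sigma\) exceeds \(\lambda_x^{-1}\) (up to the constant relating \(\|\cdot\|\) and \(\|\cdot\|_2\); the two coincide when \(\|\cdot\| = \|\cdot\|_2\)). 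Properness and lower semicontinuity of \(m_1(\cdot; x, \sigma)\) are inherited from \(\psi(\cdot; x)\) via \Cref{asm:parametric}. Because \(\nabla f(x) = J{(x)}^T F(x)\) and the quadratic coefficients depend continuously on \((x, \sigma)\), the coercivity bound holds uniformly on a neighborhood of any \((x, \bar\sigma)\) with \(\bar\sigma > \lambda_x^{-1}\), giving the required local uniformity. For part~\ref{itm:osc-sufficient-1}, I would note that \(\sigma \mapsto m_1(s; x, \sigma)\) is continuous (indeed affine) for each fixed \(s\), so the continuity conclusion of \citep[Theorem~1.17]{rtrw} yields continuity of \(p_1(x, \cdot)\) at every \(\bar\sigma > \lambda_x^{-1}\); this continuity is exactly the convergence hypothesis \(\{p_1(x, \sigma_k)\} \to p_1(x, \bar\sigma)\) invoked in part~\ref{itm:P1-osc}.

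The one genuinely new point, and the step I expect to require the most care, is the coercivity estimate. In \Cref{prop:lls-parametric} the term \(\varphi(s; x) = \tfrac{1}{2}\|J(x) s + F(x)\|_2^2 \geq 0\) already supplies a nonnegative floor, so level-boundedness needs only a positive multiple of \(\|s\|^2\); here \(\varphi_1\) is affine and unbounded below, so coercivity rests \emph{entirely} on the strict positivity of the regularization. This is why \(\sigma > \lambda_x^{-1}\) must be strict in parts~\ref{itm:p1-lsc}--\ref{itm:osc-sufficient-1}: at \(\sigma = \lambda_x^{-1}\) the displayed inequality still furnishes a lower bound, hence properness of \(p_1(x, \cdot)\), but not the coercivity needed for nonemptiness and compactness of \(P_1(x, \sigma)\). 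A secondary subtlety is the discrepancy between the generic norm appearing in the regularizer of \(m_1\) and the \(\ell_2\)-norm used to define \(\lambda_x\), which is dispatched by equivalence of norms on \(\R^n\). Beyond this bookkeeping the argument is a line-by-line copy of the earlier proof.
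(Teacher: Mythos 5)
Your proposal is correct and follows essentially the same route as the paper, which simply states that the proof replicates that of \Cref{prop:lls-parametric}: both rest on \citep[Theorem~1.17]{rtrw} applied after verifying that \(m_1(\cdot; x, \sigma)\) is proper, lsc, and level-bounded in \(s\) locally uniformly in \((x,\sigma)\), using the decomposition into \(\psi(\cdot;x) + \tfrac12\lambda_x^{-1}\|\cdot\|_2^2\) (bounded below) plus \(\varphi_1(\cdot;x) + \tfrac12(\sigma - \lambda_x^{-1})\|\cdot\|^2\) (coercive for \(\sigma > \lambda_x^{-1}\)). You also correctly isolate the one point requiring adaptation—that \(\varphi_1\) is affine rather than nonnegative, so coercivity must come entirely from the net quadratic—which the paper leaves implicit.
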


Because \(L = 0\) for \(\varphi_1\), \Cref{lem:prox-decrease} implies that the decrease achieved by \(s_1\) is \((\varphi_1 + \psi)(s_1; x) \leq (\varphi_1 + \psi)(0; x) - \tfrac{1}{2} \nu^{-1} \|s_1\|^2\), which can be rearranged as
\begin{equation}%
  \label{eq:lin-model-decrease}
  (f + h)(x) - (\varphi_1 + \psi)(s_1; x) \geq
  \tfrac{1}{2} \nu^{-1} \|s_1\|^2 \geq
  \tfrac{1}{2} \sigma \|s_1\|^2.
\end{equation}

In the special case where \(\psi = 0\), \(s_1 = - \nu^{-1} \nabla f(x)\), so that~\eqref{eq:lin-model-decrease} reduces to
\[
  \xi_1(x, \sigma) \geq
  \xi_1(x, \nu^{-1}) \geq
  f(x) - \varphi_1(s_1; x) \geq
  \tfrac{1}{2} \sigma \nu^{-1} \|\nabla f(x)\|^2 \geq
  \tfrac{1}{2} \sigma^2 \|\nabla f(x)\|^2,
\]
which suggests that \(\sigma^{-1} (\xi_1(x, \nu^{-1}))^{1/2}\) may be used as stationarity measure.

  %!TEX root = nls-composite.tex
\section{Nonlinear Least Squares}%
\label{sec:nls}

\subsection{A regularization approach}%
\label{sec:nls-lm}

We first examine the formulation of the method of \citeauthor{levenberg-1944} and \citeauthor{marquardt-1963} in which the model~\eqref{eq:lls-lm-regularized} is employed to compute a step.
Specifically, consider \Cref{alg:lm-regularized}.
The step \(s_k\) is computed by approximately minimizing~\eqref{eq:lls-lm-regularized} in stage~\ref{alg:lm-regularized:step-computation} but the quality of the step is measured without taking the regularization term \(\tfrac{1}{2} \sigma_k \|s_k\|^2\) into account in stage~\ref{alg:lm-regularized:step-rhok}.
The subproblem step \(s_k\) may be computed by continuing the iterations of the proximal gradient method initialized at \(s_{k,1}\). This gives rise to one possible implementation of \Cref{alg:lm-regularized}.

\begin{algorithm}[ht]
  \caption[caption]{%
    Nonsmooth regularized Levenberg-Marquardt method.%
    \label{alg:lm-regularized}
  }
  \begin{algorithmic}[1]
    \State Choose constants \(0 < \eta_1 \leq \eta_2 < 1\) and \(0 < \gamma_3 \leq 1 < \gamma_1 \leq \gamma_2\).
    \State Choose \(x_0 \in \R^n\) where \(h\) is finite, \(\sigma_0 > 0\), compute \(F(x_0)\) and \(h(x_0)\).
    \For{\(k = 0, 1, \dots\)}
      \State\label{alg:lm-regularized:step-nuk}%
      Choose a steplength \(\nu_k < 1 / (\|J(x_k)\|^2 + \sigma_k)\).
      \State Compute \(s_{k,1}\) as defined in~\eqref{eq:def-s1} and \(\xi_1(x_k, \nu_k^{-1})\) as defined in~\eqref{eq:def-xi1}.
      \State Define \(m(s; x_k, \sigma_k)\) as in~\eqref{eq:lls-lm-regularized}.
      \State\label{alg:lm-regularized:step-computation}%
      Compute an approximate solution \(s_k\) of~\eqref{eq:lls-parametric-opt}.
      \State\label{alg:lm-regularized:step-rhok}%
      Compute the ratio
      \[
      \rho_k :=
      \frac{
        f(x_k) + h(x_k) - (f(x_k + s_k) + h(x_k + s_k))
      }{
        \varphi(0; x_k) + \psi(0; x_k) - (\varphi(s_k; x_k) + \psi(s_k; x_k))
      }.
      \]
      \State\label{alg:lm-regularized:step-accept}%
      If \(\rho_k \geq \eta_1\), set \(x_{k+1} = x_k + s_k\). Otherwise, set \(x_{k+1} = x_k\).
      \State\label{alg:lm-regularized:step-update}%
      Update the regularization parameter according to
      \[
        \sigma_{k+1} \in
        \begin{cases}
          [\gamma_3 \sigma_k, \, \sigma_k] & \text{ if } \rho_k \geq \eta_2,
          \\ [\sigma_k, \, \gamma_1 \sigma_k] & \text{ if } \eta_1 \leq \rho_k < \eta_2,
          \\ [\gamma_1 \sigma_k, \, \gamma_2 \sigma_k] & \text{ if } \rho_k < \eta_1.
        \end{cases}
      \]
    \EndFor
  \end{algorithmic}
\end{algorithm}

It may occur that \(\sigma_k \leq \lambda_{x_k}^{-1}\).
In such a case, \(\psi(s_k; x_k) = -\infty\) so that the rules of extended arithmetic imply \(\rho_k = 0\), whether \(h(x_k + s_k) = +\infty\) or is finite.
Thus \(s_k\) will be rejected at stage~\ref{alg:lm-regularized:step-accept} and \(\sigma_{k+1}\) will be chosen larger than \(\sigma_k\) at stage~\ref{alg:lm-regularized:step-update}.
After a finite number of such increases, \(\sigma_k\) will exceed \(\lambda_{x_k}^{-1}\) and a step with finite \(\psi(s_k; x_k)\) will result.

Our main working assumption is the following.

% \begin{problemassumption}%
%   \label{asm:FJ-lipschitz}
%   The residual \(F\) is \(\mathcal{C}^1\) on the level set \(\Omega := \{x \in \R^n \mid (f+h)(x) \leq (f+h)(x_0)\}\) and \(h\) is proper and lower semi-continuous.
% \end{problemassumption}

% An auxiliary assumption that is slightly stronger but likely to be required for our assumptions on the model to be satisfied is the following.

\begin{problemassumption}%
  \label{asm:FJ-lipschitz}
  The residual \(F\) and its Jacobian \(J\) are bounded and Lipschitz continuous on \(\Omega := \{x \in \R^n \mid (f+h)(x) \leq (f+h)(x_0)\}\) and \(h\) is proper and lower semi-continuous.
\end{problemassumption}

While \Cref{asm:FJ-lipschitz} is a strong demand on all of \(\R^n\) and, in particular, rules out the case of linear least squares, it is a common assumption in the convergence analysis of the Levenberg-Marquardt method.
If \(\Omega\) is a compact set, then \(F\) is Lipschitz continuous on \(\Omega\) if it is \(\mathcal{C}^1\) on \(\Omega\), and \(J\) is Lipschitz continuous on \(\Omega\) if \(F\) is \(\mathcal{C}^2\) on \(\Omega\).

Under \Cref{asm:FJ-lipschitz}, \(\nabla f\) is Lipschitz continuous on \(\Omega\), i.e., there exists \(L > 0\) such that
\begin{equation}%
  \label{eq:df-lipschitz}
  |f(x + s) - (f(x) + \nabla f{(x)}^T s)| \leq \tfrac{1}{2} L \|s\|_2^2
  \quad \text{for all }
  x, \ x + s \in \Omega.
\end{equation}

We emphasize that in what follows, knowledge of \(L\), or an estimate thereof, is not required.
Our next assumption on the model is the following.

\begin{modelassumption}%
  \label{asm:model-adequacy}
  There exists a constant \(\kappa_{\textup{m}} > 0\) such that for all \(x\) and \(s \in \R^n\), \(|(f + h)(x + s) - (\varphi + \psi)(s; x)| \leq \kappa_{\textup{m}} \|s\|^2\).
\end{modelassumption}

\Cref{asm:model-adequacy} is essentially an assumption on the nonsmooth part \(\psi\) of the model.
Indeed,~\eqref{eq:lls-gn} and~\eqref{eq:df-lipschitz} combine to yield
\begin{align*}
  |f(x + s) - \varphi(s; x)| & \leq
  |f(x + s) - (f(x) + \nabla f{(x)}^T s)| + \tfrac{1}{2} \|J(x) s\|^2|
  \\ & \leq \tfrac{1}{2} (L + \|J(x)\|^2) \|s\|^2.
\end{align*}
where we used the definition of \(f(x)\), the identity \(\nabla f(x) = J{(x)}^T F(x)\), and~\eqref{eq:df-lipschitz}.
Thus if \(J\) is bounded on \(\Omega\), we obtain
\[
  |f(x + s) - \varphi(s; x)| \leq \tfrac{1}{2} (L + \sup_{x \in \Omega} \|J(x)\|^2) \|s\|^2.
\]
In particular, \Cref{asm:model-adequacy} is satisfied with \(\kappa_{\textup{m}} = \tfrac{1}{2} (L + \sup_{x \in \Omega} \|J(x)\|^2)\) if we select \(\psi(s; x) := h(x + s)\).

We make the following additional assumption and say that \(\{\psi(\cdot; x_k)\}\) is \emph{uniformly prox-bounded}.

\begin{modelassumption}%
  \label{asm:unif-prox-bounded}
  There exists \(\lambda > 0\) such that \(\lambda_{x_k} \geq \lambda\) for all \(k \in \N\).
\end{modelassumption}

\Cref{asm:unif-prox-bounded} is satisfied if \(h\) itself is prox-bounded and we select \(\psi(s; x_k) := h(x_k + s)\) at each iteration.

Our first result ensures that \(\sigma_k\) is bounded above in \Cref{alg:lm-regularized}.

\begin{theorem}%
  \label{thm:sigmasucc}
  Let \Cref{asm:FJ-lipschitz,asm:parametric,asm:model-adequacy,asm:unif-prox-bounded} be satisfied, and let
  \begin{equation}
    \label{eq:def-sigmasucc}
    \sigma_{\textup{succ}} :=
    \max(2 \kappa_{\textup{m}} / (1 - \eta_2), \, \lambda^{-1})
     > 0.
  \end{equation}
  If \(x_k\) is not first-order stationary and
  \(
    \sigma_k \geq \sigma_{\textup{succ}}
  \),
  then iteration \(k\) is very successful and
  \(
    \sigma_{k+1} \leq \sigma_k
  \).
\end{theorem}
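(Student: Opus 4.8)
The plan is to prove the stronger statement $\rho_k \geq \eta_2$, where $\rho_k$ is the ratio formed in stage~\ref{alg:lm-regularized:step-rhok}; the conclusions that iteration $k$ is very successful and that $\sigma_{k+1} \leq \sigma_k$ then follow immediately from the first branch of the update rule in stage~\ref{alg:lm-regularized:step-update}. Before estimating $\rho_k$, I would record that $\sigma_k \geq \sigma_{\textup{succ}} \geq \lambda^{-1} \geq \lambda_{x_k}^{-1}$ by \Cref{asm:unif-prox-bounded}, so that $\psi(\cdot; x_k)$ is prox-bounded at this value of $\sigma_k$; then $p(x_k, \sigma_k)$, $P(x_k, \sigma_k)$ and $\xi(x_k, \sigma_k)$ are well-defined and finite by \Cref{prop:lls-parametric}, and in particular $\psi(s_k; x_k) > -\infty$, so that the denominator of $\rho_k$ is a genuine real number.

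The core identity I would use, obtained from $(\varphi + \psi)(0; x_k) = f(x_k) + h(x_k)$, is
\[
  1 - \rho_k = \frac{(f + h)(x_k + s_k) - (\varphi + \psi)(s_k; x_k)}{(f + h)(x_k) - (\varphi + \psi)(s_k; x_k)}.
\]
The numerator is controlled directly by \Cref{asm:model-adequacy}, which bounds its absolute value by $\kappa_{\textup{m}} \|s_k\|^2$. The denominator is the predicted decrease, and the main work is to bound it below by a multiple of $\|s_k\|^2$.

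For that lower bound I would rewrite the predicted decrease through the full model as
\[
  (f + h)(x_k) - (\varphi + \psi)(s_k; x_k) = m(0; x_k, \sigma_k) - m(s_k; x_k, \sigma_k) + \tfrac{1}{2} \sigma_k \|s_k\|^2,
\]
using $m(0; x_k, \sigma_k) = (f + h)(x_k)$. The decisive fact is that the computed step does not increase the model, $m(s_k; x_k, \sigma_k) \leq m(0; x_k, \sigma_k)$: the subproblem solver starts from the first proximal-gradient step $s_{k,1}$, and the choice $\nu_k < 1/(\|J(x_k)\|^2 + \sigma_k)$ in stage~\ref{alg:lm-regularized:step-nuk} makes $\nu_k^{-1}$ exceed the Lipschitz constant $\|J(x_k)\|^2 + \sigma_k$ of the gradient of the smooth part $\varphi(\cdot; x_k) + \tfrac{1}{2} \sigma_k \|\cdot\|^2$ of $m$, so \Cref{lem:prox-decrease} forces each proximal-gradient iterate, and hence $s_k$, to satisfy this monotonicity. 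Thus the predicted decrease is at least $\tfrac{1}{2} \sigma_k \|s_k\|^2$, and combining with the numerator bound gives $|1 - \rho_k| \leq 2 \kappa_{\textup{m}} / \sigma_k$ whenever $s_k \neq 0$.

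To close, I would use non-stationarity of $x_k$ to guarantee $s_k \neq 0$ (so the denominator is strictly positive and the ratio is well-posed): if $s_{k,1} = 0$ then $0 \in P_1(x_k, \nu_k^{-1})$, which by \Cref{lem:lm-stationary-1} would make $x_k$ first-order stationary, a contradiction; since the solver only decreases the model further, $m(s_k; x_k, \sigma_k) < m(0; x_k, \sigma_k)$ and so $s_k \neq 0$. Finally $\sigma_k \geq \sigma_{\textup{succ}} \geq 2 \kappa_{\textup{m}} / (1 - \eta_2)$ yields $|1 - \rho_k| \leq 1 - \eta_2$, hence $\rho_k \geq \eta_2$. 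I expect the main obstacle to be precisely this predicted-decrease estimate: one must pin down the sense in which $s_k$ is an \emph{approximate} minimizer and verify that, whatever the solver returns, it still satisfies $m(s_k; x_k, \sigma_k) \leq m(0; x_k, \sigma_k)$ with strict inequality at non-stationary $x_k$; the remaining manipulations of the ratio and the model-adequacy bound are routine.
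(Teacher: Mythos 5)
Your proposal is correct and follows essentially the same route as the paper's proof: both bound the predicted decrease below by \(\tfrac12 \sigma_k \|s_k\|^2\) using the fact that the approximate step does not increase \(m(\cdot; x_k, \sigma_k)\), bound the numerator of \(|1-\rho_k|\) by \(\kappa_{\textup{m}}\|s_k\|^2\) via \Cref{asm:model-adequacy}, and conclude \(|\rho_k - 1| \leq 2\kappa_{\textup{m}}/\sigma_k \leq 1-\eta_2\). Your additional justifications — that \(\sigma_k \geq \lambda^{-1}\) makes the denominator finite, that the proximal-gradient initialization guarantees the monotonicity \(m(s_k) \leq m(0)\), and that non-stationarity forces \(s_{k,1} \neq 0\) hence \(s_k \neq 0\) — are exactly the points the paper leaves implicit or asserts without elaboration.
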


\begin{proof}
  Let \(s_k\) be the step computed at iteration~\(k\) of \Cref{alg:lm-regularized}.
  If \(\sigma_k < \lambda_{x_k}^{-1}\), \(\rho_k = 0\) as explained above, \(s_k\) is rejected and \(\sigma_k\) is increased.
  Hence, we assume that \(\sigma_k \geq \lambda^{-1} \geq \lambda_{x_k}^{-1}\).
  Because \(x_k\) is not first-order stationary, \(s_k \neq 0\).
  Because \(s_k\) is an approximate solution of~\eqref{eq:lls-parametric-opt}, we must have
  \begin{equation*}
    \varphi(0; x_k) + \psi(0; x_k) \geq
    \varphi(s_k; x_k) + \tfrac{1}{2} \sigma_k \|s_k\|^2 + \psi(s_k; x_k)
  \end{equation*}
  and therefore,
  \begin{equation}%
    \label{eq:model-decrease}
    \varphi(0; x_k) + \psi(0; x_k) - (\varphi(s_k; x_k) + \psi(s_k; x_k)) \geq
    \tfrac{1}{2} \sigma_k \|s_k\|^2.
  \end{equation}

  \Cref{asm:model-adequacy} and~\eqref{eq:model-decrease} combine to yield
  \[
    |\rho_k - 1| =
    \frac{
      |f(x_k + s_k) + h(x_k + s_k) - (\varphi(s_k; x_k) + \psi(s_k; x_k))|
    }{
      \varphi(0; x_k) + \psi(0; x_k) - (\varphi(s_k; x_k) + \psi(s_k; x_k))
    } \leq
    \frac{2 \kappa_{\textup{m}} \|s_k\|^2}{\sigma_k \|s_k\|^2}.
  \]
  After simplifying by \(\|s_k\|^2\), we obtain \(\sigma_k \geq \sigma_{\text{succ}} \Longrightarrow \rho_k \geq \eta_2\).
\end{proof}

Note that \Cref{thm:sigmasucc} does not explicitly include \Cref{asm:FJ-lipschitz} in its assumptions, though it is likely to be required for \Cref{asm:model-adequacy} to hold.

Interestingly, \Cref{thm:sigmasucc} holds without assuming that the step \(s_k\) satisfies a sufficient decrease condition.
Upon examination of the proof, the reason turns out to be that any step that results in simple decrease in \(m(s; \sigma, x)\) results in sufficient decrease in \(\varphi(\cdot; x) + \psi(\cdot; x)\), independently of the method used to compute \(s_k\).

\Cref{thm:sigmasucc} ensures existence of a constant \(\sigma_{\max} > 0\) such that
\begin{equation}%
  \label{eq:sigmamax}
  \sigma_k \leq
  \sigma_{\max} := \min(\sigma_0, \gamma_2 \sigma_{\textup{succ}}) > 0
  \quad \text{for all } k \in \N.
\end{equation}

Our next result concerns the situation where a finite number of successful iterations occur.
The proof is almost identical to that of \citep[Theorem~\(6.4.4\)]{conn-gould-toint-2000} and \citep[Theorem~\(3.5\)]{aravkin-baraldi-orban-2021} and is omitted.

\begin{theorem}%
  \label{thm:lm-finite}
  Let \Cref{asm:FJ-lipschitz,asm:parametric,asm:model-adequacy} be satisfied.
  If \Cref{alg:lm-regularized} only generates finitely many successful iterations, then \(x_k = x^*\) for all sufficiently large \(k\) and \(x^*\) is first-order critical.
\end{theorem}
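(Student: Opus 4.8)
The plan is to follow the classical two-phase template for regularization-type methods with finitely many successful iterations: first pin down the eventual constant iterate, then establish its criticality by driving \(\sigma_k\) to infinity and re-running, locally, the mechanism behind \Cref{thm:sigmasucc}.

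First I would locate the final successful iteration. By hypothesis there are finitely many, so let \(k_0\) be the index of the last one; this is well defined. For every \(k > k_0\) the iteration is unsuccessful, so stage~\ref{alg:lm-regularized:step-accept} sets \(x_{k+1} = x_k\). Hence \(x_k = x_{k_0+1} =: x^*\) for all \(k \geq k_0 + 1\), which is exactly the first assertion. Next I would show \(\sigma_k \to \infty\): since each iteration \(k > k_0\) is unsuccessful we have \(\rho_k < \eta_1 \leq \eta_2\), so stage~\ref{alg:lm-regularized:step-update} forces \(\sigma_{k+1} \geq \gamma_1 \sigma_k\) with \(\gamma_1 > 1\); iterating gives \(\sigma_k \geq \gamma_1^{\,k - k_0 - 1} \sigma_{k_0+1} \to \infty\).

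For criticality I would argue by contradiction, assuming \(x^*\) is not first-order stationary. The iterate is now frozen at the single point \(x^*\), and \Cref{asm:parametric} already guarantees that \(\psi(\cdot; x^*)\) is prox-bounded with a finite threshold \(\lambda_{x^*}^{-1}\). I would set \(\hat\sigma := \max\bigl(2 \kappa_{\textup{m}} / (1 - \eta_2), \, \lambda_{x^*}^{-1}\bigr)\) and, using \(\sigma_k \to \infty\), pick \(k_1 > k_0\) with \(\sigma_{k_1} \geq \hat\sigma\). The computation in the proof of \Cref{thm:sigmasucc} then applies verbatim with \(x_k = x^*\): non-stationarity of \(x^*\) yields \(s_{k_1} \neq 0\) through \Cref{lem:lm-stationary}, the simple-decrease property of the approximate subproblem solution supplies the bound~\eqref{eq:model-decrease}, and \Cref{asm:model-adequacy} gives \(|\rho_{k_1} - 1| \leq 2 \kappa_{\textup{m}} / \sigma_{k_1} \leq 1 - \eta_2\), i.e.\ \(\rho_{k_1} \geq \eta_2 \geq \eta_1\). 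Thus iteration \(k_1 > k_0\) is successful, contradicting the choice of \(k_0\), so \(x^*\) must be first-order critical.

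The only genuinely non-mechanical point, and the one I would flag explicitly, is the hypothesis mismatch: \Cref{thm:sigmasucc} is stated under \Cref{asm:unif-prox-bounded}, which \Cref{thm:lm-finite} deliberately drops. The resolution is that uniform prox-boundedness is needed only to control \(\lambda_{x_k}\) across the infinitely many distinct iterates of the general case; once only finitely many successful steps occur, the iterate stabilizes at \(x^*\) and the single threshold \(\lambda_{x^*}\) from \Cref{asm:parametric} is all that the argument requires, with \(\lambda^{-1}\) simply replaced by \(\lambda_{x^*}^{-1}\) in the definition of the successful-step threshold. A minor care point is the degenerate regime \(\sigma_k < \lambda_{x^*}^{-1}\), where the subproblem is unbounded below and \(\rho_k = 0\); this is harmless because \(\sigma_k \to \infty\) escapes it after finitely many iterations, exactly as handled in the proof of \Cref{thm:sigmasucc}.
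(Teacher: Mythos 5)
Your proof is correct and follows exactly the classical template that the paper itself invokes (it omits the proof, deferring to \citep[Theorem~6.4.4]{conn-gould-toint-2000} and \citep[Theorem~3.5]{aravkin-baraldi-orban-2021}): fix the iterate after the last successful iteration, drive \(\sigma_k \to \infty\) through the unsuccessful-update rule, and contradict non-stationarity via the mechanism of \Cref{thm:sigmasucc}. Your explicit resolution of the hypothesis mismatch --- replacing the uniform threshold \(\lambda^{-1}\) of \Cref{asm:unif-prox-bounded} by the single pointwise threshold \(\lambda_{x^*}^{-1}\) available from \Cref{asm:parametric} once the iterate has stabilized --- is exactly the right justification for why \Cref{thm:lm-finite} can drop that assumption.
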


By \citet[Theorem~\(1.25\)]{rtrw}, \(p_1(x, \sigma)\) increases when \(\sigma\) increases, and thus, \(\xi_1(x, \sigma)\) decreases when \(\sigma\) increases.
Thus, it follows from~\eqref{eq:sigmamax} that
\begin{equation}%
  \label{eq:ximin}
  \xi_1(x_k, \sigma_k) \geq \xi_1(x_k, \sigma_{\max})
  \quad \text{for all } k \in \N.
\end{equation}
\Cref{lem:lm-stationary},~\eqref{eq:ximin} and the remarks at the end of \cref{sec:lls} suggest using \(\xi_1{(x_k, \sigma_{\max})}^{\frac12}\) as stationarity measure.
Indeed, for given \(\epsilon > 0\), \(\xi_1(x_k, \sigma_{\max}) \leq \epsilon / \sigma_{\max} \Longrightarrow \sigma_k \xi_1(x_k, \sigma_{\max}) \leq \epsilon\).

Because we must choose the steplength \(\nu_k\) as in Step~\ref{alg:lm-regularized:step-nuk} of \Cref{alg:lm-regularized}, we compute \(\xi_1(x_k, \nu_k^{-1})\) rather than \(\xi_1(x_k, \sigma_k)\).
Concretely, for given \(0 < \theta < 1\), we set
\begin{equation}%
  \label{eq:nuk}
  \nu_k := \theta / (\|J_k\|^2 + \sigma_k).
\end{equation}
Under \Cref{asm:FJ-lipschitz}, there exists \(\kappa_J > 0\) such that \(\|J(x)\| \leq \kappa_J\) for all \(x \in \Omega\).
Because \Cref{alg:lm-regularized} only generates \(x_k \in \Omega\), the above and~\eqref{eq:sigmamax} yield
\begin{equation}%
  \label{eq:numin}
  \nu_k \geq \theta / (\kappa_J^2 + \sigma_{\max}) := \nu_{\min} > 0
  \quad \text{for all } k \in \N.
\end{equation}
Therefore, \(\nu_k^{-1} \leq \nu_{\min}^{-1}\) for all \(k \geq 0\), and
\begin{equation}%
  \label{eq:ximin-nu}
  \xi_1(x_k, \nu_k^{-1}) \geq \xi_1(x_k, \nu_{\min}^{-1})
  \quad \text{for all } k \in \N.
\end{equation}

For a stopping tolerance \(\epsilon \in (0, \, 1)\), we seek to determine \(k(\epsilon) \in \N\) such that
\begin{equation}
  \label{eq:lm-stop}
  \xi_1{(x_k, \nu_{\min}^{-1})}^{\frac12} > \epsilon \quad \text{for all } k < k(\epsilon)
  \quad \text{and} \quad
  \xi_1{(x_{k(\epsilon)}, \nu_{\min}^{-1})}^{\frac12} \leq \epsilon.
\end{equation}
Define the sets
\begin{subequations}%
  \label{eq:S-U-sets}
  \begin{align}
       \mathcal{S} & := \{ k \in \N \mid \rho_k \geq \eta_1 \},
    \\ \mathcal{S}(\epsilon) & := \{ k \in \mathcal{S} \mid k < k(\epsilon) \},
    \\ \mathcal{U}(\epsilon) & := \{k \in \N \mid k \not \in \mathcal{S} \text{ and } k < k(\epsilon) \}.
  \end{align}
\end{subequations}

In order to conduct the complexity analysis, it is necessary to assume that the step computation at stage~\ref{alg:lm-regularized:step-computation} of \Cref{alg:lm-regularized} is related to \(\xi_1(x_k, \sigma_k)\).
We make the following assumption.

\begin{stepassumption}%
  \label{asm:step-computation}
  There exists \(\kappa_{\textup{mdc}} \in (0, \, 1)\) such that \(s_k\) computed at stage~\ref{alg:lm-regularized:step-computation} of \Cref{alg:lm-regularized} satisfies
  \begin{equation}%
    \label{eq:sufficient-decrease}
    \varphi(0; x_k) + \psi(0; x_k) - (\varphi(s_k; x_k) + \psi(s_k; x_k)) \geq
    \kappa_{\textup{mdc}} \xi_1(x_k, \nu_k^{-1}).
  \end{equation}
\end{stepassumption}

\Cref{asm:step-computation} is similar to sufficient decrease conditions used in trust-region methods---see \citep{conn-gould-toint-2000}.
\citet{aravkin-baraldi-orban-2021} provide a concrete use of such condition in a trust-region method for nonsmooth regularized optimization.
Clearly, the sufficient decrease assumption is satisfied after a single step of the proximal gradient method applied to~\eqref{eq:lls-lm-regularized}.
Hence, it is also satisfied at a minimizer of~\eqref{eq:lls-lm-regularized}.
Thus, in step~\ref{alg:lm-regularized:step-computation} of \Cref{alg:lm-regularized}, one strategy is to continue the proximal-gradient iterations until a stopping condition is attained.

The following results parallel those of \citet{aravkin-baraldi-orban-2021}, which are in turn inspired from those of \citet{cartis-gould-toint-2011} and references therein.

\begin{lemma}%
  \label{lem:lm-cmplx-successful}
  Let \Cref{asm:FJ-lipschitz,asm:parametric,asm:model-adequacy} be satisfied and \(s_k\) be computed according to \Cref{asm:step-computation}, where \(\nu_k\) is chosen according to~\eqref{eq:nuk}.
  Assume there are infinitely many successful iterations and that \(f(x) + h(x) \geq {(f + h)}_{\textup{low}}\) for all \(x \in \R^n\).
  Then, for all \(\epsilon \in (0, \, 1)\),
  \begin{equation}
    \label{eq:lm-bound-Seps}
    |\mathcal{S}(\epsilon)| \leq
    \frac{(f + h)(x_0) - (f + h)_{\textup{low}}}{\eta_1 \kappa_{\textup{mdc}} \epsilon^2} =
    O(\epsilon^{-2}).
  \end{equation}
\end{lemma}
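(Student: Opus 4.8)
The plan is to run the standard telescoping argument for regularization methods: each successful iteration buys a fixed amount of objective decrease, and the total available decrease is capped by the initial optimality gap. First I would extract the per-iteration decrease. For any \(k \in \mathcal{S}\), acceptance at stage~\ref{alg:lm-regularized:step-accept} means \(\rho_k \geq \eta_1\), and since the denominator of \(\rho_k\) in stage~\ref{alg:lm-regularized:step-rhok} is positive off stationarity (it dominates \(\kappa_{\textup{mdc}} \xi_1(x_k, \nu_k^{-1}) > 0\) by \Cref{asm:step-computation}), I may clear it to obtain, using \(x_{k+1} = x_k + s_k\),
\[
  (f+h)(x_k) - (f+h)(x_{k+1}) \geq \eta_1 \bigl( \varphi(0; x_k) + \psi(0; x_k) - (\varphi(s_k; x_k) + \psi(s_k; x_k)) \bigr) \geq \eta_1 \kappa_{\textup{mdc}} \xi_1(x_k, \nu_k^{-1}).
\]

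Next I would convert the non-termination condition into a uniform lower bound on this decrease. For \(k \in \mathcal{S}(\epsilon)\) we have \(k < k(\epsilon)\), so the first half of the stopping rule~\eqref{eq:lm-stop} gives \(\xi_1{(x_k, \nu_{\min}^{-1})}^{1/2} > \epsilon\), i.e.\ \(\xi_1(x_k, \nu_{\min}^{-1}) > \epsilon^2\). Threading this through the monotonicity~\eqref{eq:ximin-nu}, namely \(\xi_1(x_k, \nu_k^{-1}) \geq \xi_1(x_k, \nu_{\min}^{-1})\), forces \(\xi_1(x_k, \nu_k^{-1}) > \epsilon^2\). Hence every successful iteration counted in \(\mathcal{S}(\epsilon)\) decreases the objective by strictly more than \(\eta_1 \kappa_{\textup{mdc}} \epsilon^2\).

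Finally I would telescope over \(k = 0, \dots, k(\epsilon) - 1\). On unsuccessful iterations \(x_{k+1} = x_k\), so those terms contribute zero and the objective is monotonically nonincreasing; dropping them and keeping only the contributions from \(\mathcal{S}(\epsilon)\) yields
\[
  (f+h)(x_0) - (f+h)(x_{k(\epsilon)}) \geq \sum_{k \in \mathcal{S}(\epsilon)} \eta_1 \kappa_{\textup{mdc}} \epsilon^2 = |\mathcal{S}(\epsilon)| \, \eta_1 \kappa_{\textup{mdc}} \epsilon^2.
\]
Bounding the left side above by \((f+h)(x_0) - (f+h)_{\textup{low}}\) via the assumed lower bound on the objective, and solving for \(|\mathcal{S}(\epsilon)|\), delivers~\eqref{eq:lm-bound-Seps}. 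I do not anticipate a genuine obstacle: the argument is a routine summation. The only points demanding care are confirming that the model decrease is positive so that clearing the fraction preserves the inequality direction (guaranteed since \(\xi_1 > \epsilon^2 > 0\) on \(\mathcal{S}(\epsilon)\)), and correctly invoking~\eqref{eq:ximin-nu} so that the stopping tolerance imposed on the computable surrogate \(\xi_1(x_k, \nu_{\min}^{-1})\) controls the quantity \(\xi_1(x_k, \nu_k^{-1})\) that actually appears in the per-step decrease.
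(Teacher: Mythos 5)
Your proposal is correct and follows essentially the same route as the paper: derive the per-iteration decrease \((f+h)(x_k) - (f+h)(x_{k+1}) \geq \eta_1 \kappa_{\textup{mdc}} \xi_1(x_k, \nu_k^{-1}) \geq \eta_1 \kappa_{\textup{mdc}} \xi_1(x_k, \nu_{\min}^{-1}) > \eta_1 \kappa_{\textup{mdc}} \epsilon^2\) from acceptance, \Cref{asm:step-computation}, \eqref{eq:ximin-nu} and the stopping rule~\eqref{eq:lm-stop}, then telescope against the lower bound \((f+h)_{\textup{low}}\). The paper states the same chain of inequalities and defers the telescoping to the cited reference; you have simply written it out explicitly.
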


\begin{proof}
  For \(k \in \mathcal{S}(\epsilon)\), \Cref{asm:step-computation} and~\eqref{eq:ximin-nu} imply
  \begin{align*}
    (f+h)(x_k) - (f+h)(x_k + s_k) & \geq \eta_1 (\varphi(0; x_k) + \psi(0; x_k) - (\varphi(s_k; x_k) + \psi(s_k; x_k)))
    \\ & \geq \eta_1 \kappa_{\textup{mdc}} \xi_1(x_k, \nu_k^{-1})
    \\ & \geq \eta_1 \kappa_{\textup{mdc}} \xi_1(x_k, \nu_{\min}^{-1})
    \\ & \geq \eta_1 \kappa_{\textup{mdc}} \epsilon^2.
  \end{align*}
  The rest of the proof mirrors that of \citep[Lemma~\(3.6\)]{aravkin-baraldi-orban-2021}.
\end{proof}

\begin{lemma}%
  \label{lem:lm-cmplx-unsuccessful}
  Under the assumptions of \Cref{lem:lm-cmplx-successful},
  \begin{equation}
    \label{eq:lm-bound-Ueps}
    |\mathcal{U}(\epsilon)| \leq
    \frac{\log(\sigma_{\max} / \sigma_0)}{\log(\gamma_1)} + |\mathcal{S}(\epsilon)| \frac{|\log(\gamma_3)|}{\log(\gamma_1)}
    =
    O(\epsilon^{-2}).
  \end{equation}
\end{lemma}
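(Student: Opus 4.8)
The plan is to bound the number of unsuccessful iterations by tracking the multiplicative evolution of the regularization parameter $\sigma_k$ and playing the growth forced at unsuccessful iterations against the decrease permitted at successful ones. This is the classical counting argument for regularization and trust-region complexity, and it parallels the companion result to \Cref{lem:lm-cmplx-successful} in \citet{aravkin-baraldi-orban-2021}. The crucial ingredient already available to us is the uniform upper bound $\sigma_k \leq \sigma_{\max}$ from \eqref{eq:sigmamax}, itself a consequence of \Cref{thm:sigmasucc}.

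First I would read off the per-iteration behaviour of $\sigma_k$ from the update rule in \Cref{alg:lm-regularized}. At an unsuccessful iteration ($k \in \mathcal{U}(\epsilon)$, i.e.\ $\rho_k < \eta_1$) we have $\sigma_{k+1} \geq \gamma_1 \sigma_k$, while at a successful iteration ($k \in \mathcal{S}(\epsilon)$) we have $\sigma_{k+1} \geq \gamma_3 \sigma_k$. The one point worth checking is that the lower factor $\gamma_3$ covers both successful regimes: in the very successful case $\sigma_{k+1} \geq \gamma_3 \sigma_k$ directly, and in the merely successful case $\sigma_{k+1} \geq \sigma_k \geq \gamma_3 \sigma_k$ because $\gamma_3 \leq 1$. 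Taking logarithms turns these multiplicative bounds into additive ones: each $k \in \mathcal{U}(\epsilon)$ contributes at least $\log \gamma_1 > 0$ to $\log \sigma_{k+1} - \log \sigma_k$, and each $k \in \mathcal{S}(\epsilon)$ contributes at least $\log \gamma_3 = -|\log \gamma_3| \leq 0$.

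Next I would telescope over $k = 0, \dots, k(\epsilon)-1$. Since $\mathcal{S}(\epsilon)$ and $\mathcal{U}(\epsilon)$ partition that index set, collapsing the sum gives $\log \sigma_{k(\epsilon)} - \log \sigma_0 \geq |\mathcal{U}(\epsilon)| \log \gamma_1 - |\mathcal{S}(\epsilon)|\, |\log \gamma_3|$. Invoking $\sigma_{k(\epsilon)} \leq \sigma_{\max}$ on the left-hand side, rearranging, and dividing by $\log \gamma_1 > 0$ yields exactly the first inequality in \eqref{eq:lm-bound-Ueps}.

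Finally, the $O(\epsilon^{-2})$ conclusion follows by substituting the bound $|\mathcal{S}(\epsilon)| = O(\epsilon^{-2})$ from \Cref{lem:lm-cmplx-successful} and observing that the leading term $\log(\sigma_{\max}/\sigma_0)/\log \gamma_1$ is a constant independent of $\epsilon$. I do not anticipate a genuine obstacle here: the argument is pure bookkeeping once $\sigma_{\max}$ is in hand. The only place demanding care is the uniform treatment of the two successful regimes noted above, together with the sign bookkeeping from $\gamma_3 \leq 1 < \gamma_1$, so that the bound $\sigma_{k+1} \geq \gamma_3 \sigma_k$ is applied with the correct orientation when it is carried across to the right-hand side.
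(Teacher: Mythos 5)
Your proposal is correct and follows essentially the same argument as the paper: both track the multiplicative evolution of $\sigma_k$ using $\sigma_{k+1} \geq \gamma_1 \sigma_k$ on unsuccessful iterations and $\sigma_{k+1} \geq \gamma_3 \sigma_k$ on successful ones, telescope up to $k(\epsilon)$, bound by $\sigma_{\max}$ from~\eqref{eq:sigmamax}, and take logarithms. Your explicit check that $\gamma_3 \leq 1$ covers both successful regimes is a detail the paper leaves implicit, but the route is identical.
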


\begin{proof}
  For each \(k \in \mathcal{U}(\epsilon)\), \(\sigma_{k+1} \geq \gamma_1 \sigma_k\), while for each \(k \in \mathcal{S}(\epsilon)\), \(\sigma_{k+1} \geq \gamma_3 \sigma_k\).
  Thus if \(k(\epsilon)\) is the iteration for which~\eqref{eq:lm-stop} occurs for the first time,
  \[
    \sigma_0 \gamma_1^{|\mathcal{U}(\epsilon)|} \gamma_3^{|\mathcal{S}(\epsilon)|} \leq
    \sigma_{k(\epsilon) - 1} \leq
    \sigma_{\max}.
  \]
  Taking logarithms, we have
  \[
    |\mathcal{U}(\epsilon)| \log(\gamma_1) + |\mathcal{S}(\epsilon)| \log(\gamma_3) \leq
    \log(\sigma_{\max} / \sigma_0).
  \]
  Rearranging and recalling that \(0 < \gamma_3 < 1\) yields~\eqref{eq:lm-bound-Ueps}.
\end{proof}

Combining \Cref{lem:lm-cmplx-successful,lem:lm-cmplx-unsuccessful} yields the overall iteration complexity bound.

\begin{theorem}%
  \label{thm:lm-complexity-bound}
  Under the assumptions of \Cref{lem:lm-cmplx-successful},
  \begin{equation}
    \label{eq:lm-nonsmooth-complexity}
    |\mathcal{S}(\epsilon)| + |\mathcal{U}(\epsilon)| =
    O(\epsilon^{-2}).
  \end{equation}
\end{theorem}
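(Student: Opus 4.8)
The plan is to observe that the theorem follows immediately by adding the two bounds already established in \Cref{lem:lm-cmplx-successful,lem:lm-cmplx-unsuccessful}, once we note that \(\mathcal{S}(\epsilon)\) and \(\mathcal{U}(\epsilon)\) together exhaust the iterations preceding \(k(\epsilon)\). First I would record that, by the definitions in~\eqref{eq:S-U-sets}, the sets \(\mathcal{S}(\epsilon)\) and \(\mathcal{U}(\epsilon)\) are disjoint and their union is exactly \(\{0, 1, \dots, k(\epsilon) - 1\}\); hence the total number of iterations performed before the stopping criterion~\eqref{eq:lm-stop} is first met equals \(|\mathcal{S}(\epsilon)| + |\mathcal{U}(\epsilon)|\). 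This identifies the quantity on the left of~\eqref{eq:lm-nonsmooth-complexity} with the genuine iteration count \(k(\epsilon)\), so it suffices to bound each summand.

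Next I would invoke \Cref{lem:lm-cmplx-successful} to bound \(|\mathcal{S}(\epsilon)|\) by a constant multiple of \(\epsilon^{-2}\), where the constant depends only on \(\eta_1\), \(\kappa_{\textup{mdc}}\), and the gap \((f+h)(x_0) - (f+h)_{\textup{low}}\). I would then substitute this bound into the right-hand side of~\eqref{eq:lm-bound-Ueps} from \Cref{lem:lm-cmplx-unsuccessful}. The point to stress here is that \(|\mathcal{U}(\epsilon)|\) is controlled by a fixed term \(\log(\sigma_{\max}/\sigma_0)/\log(\gamma_1)\) plus a term proportional to \(|\mathcal{S}(\epsilon)|\); because the proportionality factor \(|\log(\gamma_3)|/\log(\gamma_1)\) is a constant independent of \(\epsilon\), the \(O(\epsilon^{-2})\) growth of \(|\mathcal{S}(\epsilon)|\) transfers directly to \(|\mathcal{U}(\epsilon)|\) without degrading the exponent.

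Finally I would add the two estimates to obtain \(|\mathcal{S}(\epsilon)| + |\mathcal{U}(\epsilon)| = O(\epsilon^{-2})\), which is the claim. There is no real obstacle: the two constituent lemmas do all the work, and the only thing that must be checked is the bookkeeping that unsuccessful iterations cannot outnumber successful ones by more than a constant factor plus an additive constant, which is precisely the content of \Cref{lem:lm-cmplx-unsuccessful} and rests on the geometric growth of \(\sigma_k\) on unsuccessful steps balanced against its worst-case geometric shrinkage on successful steps, all capped above by \(\sigma_{\max}\) from~\eqref{eq:sigmamax}. The only mild subtlety worth flagging is to confirm that the constant in the \(\mathcal{U}(\epsilon)\) bound does not secretly depend on \(\epsilon\), which it does not, since \(\sigma_{\max}\), \(\sigma_0\), and the \(\gamma_i\) are all fixed algorithmic quantities.
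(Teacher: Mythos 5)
Your proposal is correct and matches the paper's argument: the paper proves \Cref{thm:lm-complexity-bound} simply by combining \Cref{lem:lm-cmplx-successful} and \Cref{lem:lm-cmplx-unsuccessful}, exactly as you do by substituting the $O(\epsilon^{-2})$ bound on $|\mathcal{S}(\epsilon)|$ into the bound~\eqref{eq:lm-bound-Ueps} on $|\mathcal{U}(\epsilon)|$ and summing. The additional bookkeeping you supply (that $\mathcal{S}(\epsilon)$ and $\mathcal{U}(\epsilon)$ partition the iterations preceding $k(\epsilon)$, and that the constants are independent of $\epsilon$) is implicit in the paper and correctly spelled out in your write-up.
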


Stated differently, \Cref{thm:lm-complexity-bound} ensures that either \((f + h)(x_k) \to -\infty\) or that \(\liminf_{k \to \infty} \xi_1(x_k, \nu_{\min}^{-1}) = 0\).

\subsection{A trust-region approach}%
\label{sec:nls-tr}

We now apply Algorithm~\(3.1\) of \citet{aravkin-baraldi-orban-2021} to~\eqref{eq:nlls}.
We assume that each \(f_i: \R^n \to \R\) is \(\mathcal{C}^1\), so that their Problem Assumption~\(3.1\) is satisfied.
A natural model for \(f\) about \(x\) is the Gauss-Newton model~\eqref{eq:lls-gn}, which satisfies \(\varphi(0; x) = f(x)\) and \(\nabla_s \varphi(0; x) = \nabla f(x) = J{(x)}^T F(x)\).
The model \(\psi(s; x)\) of \(h(x + s)\) is required to satisfy the same \Cref{asm:model-adequacy}, which holds provided \(\nabla f\) is Lipschitz continuous or each \(f_i\) is \(\mathcal{C}^2\) with bounded Hessian.
In \citet[Algorithm~\(3.1\)]{aravkin-baraldi-orban-2021}, the first proximal gradient step \(s_1\) is computed by solving
\begin{equation}%
  \label{eq:s1-tr}
  \begin{aligned}
    \minimize{s} \ & \tfrac{1}{2} \|F(x)\|_2^2 + {(J{(x)}^T F(x))}^T s + \tfrac{1}{2} \nu^{-1} \|s\|^2 + \psi(s; x)
    \\ \st & \ \|s\| \leq \Delta,
  \end{aligned}
\end{equation}
i.e.,
\[
  s_1 \in \prox{\nu \psi(\cdot; x) + \chi(\cdot \mid \Delta \B)} (-\nu J{(x)}^T F(x)),
\]
where \(0 < \nu < 1 / (\|J(x)\|^2 + \alpha^{-1} \Delta^{-1})\) for a preset constant \(\alpha > 0\).
Subsequent steps continue the proximal gradient iterations to compute an approximate solution of
\begin{equation}%
\label{eq:tr-subproblem}
  \minimize{s} \ \tfrac{1}{2} \|J(x) s + F(x)\|_2^2 + \psi(s; x)
  \quad \ \st \|s\| \leq \min(\beta \|s_1\|, \, \Delta),
\end{equation}
where \(\beta \geq 1\).
The above describes a trust-region variant of the method of \citet{levenberg-1944} and \citet{marquardt-1963} for regularized nonlinear least-squares problems.
The assumption that \(\psi(\cdot; x)\) is prox-bounded can be removed because \(\psi(\cdot; x) + \chi(\cdot \mid \Delta \B)\) is always bounded below, hence prox-bounded with \(\lambda_x = \infty\).
An approximate solution of~\eqref{eq:tr-subproblem} must satisfy \Cref{asm:step-computation} with \(\xi_1(x, \sigma)\) replaced with
\[
  \hat{\xi}_1(\Delta; x, \nu) := f(x) + h(x) - \hat{p}_1(\Delta; x, \nu),
\]
where \(\hat{p}_1(\Delta; x, \nu)\) is the optimal value of~\eqref{eq:s1-tr}.

Under the above assumptions, \citeauthor{aravkin-baraldi-orban-2021} establish that the trust-region radius \(\Delta\) never drops below the threshold
\[
  \Delta_{\min} :=
  \min\left(
    \Delta_0, \,
    \hat{\gamma}_1
    \frac{\kappa_{\textup{mdc}} (1 - \eta_2)}{2 \kappa_{\textup{m}} \alpha \beta^2}
  \right),
\]
where \(\Delta_0 > 0\) is the initial trust-region radius, \(\hat{\gamma}_1 \in (0, \, 1)\) is the fraction by which \(\Delta\) is reduced on rejected steps, \(\eta_2 \in (0, \, 1)\) is the threshold above which \(\Delta\) is increased on accepted steps, and \(\kappa_{\textup{mdc}}\) and \(\kappa_{\textup{m}}\) play similar roles as the constants of the same name in \Cref{asm:model-adequacy,asm:step-computation}.

\citeauthor{aravkin-baraldi-orban-2021} use \(\hat{\xi}_1(\Delta_{\min}; x, \nu)\) as stationarity measure.
They show that for any \(\epsilon \in (0, \, 1)\), the number of iterations necessary to achieve
\[
  \hat{\xi}_1(\Delta_{\min}; x, \nu)^{\frac12} \leq \epsilon
\]
is \(O(\epsilon^{-2})\) provided that \(f + h\) is bounded below.
We refer the reader to \citep{aravkin-baraldi-orban-2021} for complete details.

  %!TEX root = nls-composite.tex
\section{Proximal operators}%
\label{sec:prox}

In \Cref{alg:lm-regularized} or the algorithm of \Cref{sec:nls-tr}, a typical model of the nonsmooth term \(h\) is \(\psi(s; x) := h(x + s)\).
If those algorithms are to use \citeauthor{aravkin-baraldi-orban-2021}'s quadratic regularization method \citep[Algorithm~\(6.1\)]{aravkin-baraldi-orban-2021} to compute a step, the latter will in turn form a model of \(\psi(\cdot; x)\) at each iteration.
In order to simplify notation, let \(\psi_k(s) := \psi(s; x_k) = h(x_k + s)\) be the model used at iteration \(k\) of \Cref{alg:lm-regularized} or the algorithm of \Cref{sec:nls-tr}.

\subsection{General proximal operators}

In \Cref{alg:lm-regularized}, the nonsmooth term in the objective of the subproblem is \(\psi_k(s)\).
The typical model about \(s_j\) reduces to \(\omega_j(t) = \psi_k(s_j + t) = h(x_k + s_j + t)\) and, instead of~\eqref{eq:qr-tr-subproblem}, the step computed is
\begin{equation}%
  \label{eq:qr-reg-subproblem}
  t_j \in \argmin{t} \ \tfrac{1}{2} \nu^{-1} \|t - q\|^2 + h(x_k + s_j + t).
\end{equation}
The same change of variable as above yields
\[
  v_j \in \argmin{v} \ \tfrac{1}{2} \nu^{-1} \|v - \bar{q}\|^2 + h(v) = \prox{\nu h}(\bar{q}),
\]
whether \(h\) is separable or not.
Thus we obtain
\[
  t_j \in \prox{\nu h}(\bar{q}) - (x_k + s_j).
\]

The nonsmooth term in the objective of the subproblem of the algorithm of \Cref{sec:nls-tr} is \(\psi_k(s) + \chi(s ; \Delta_k)\).
About iterate \(s_j\) of \citep[Algorithm~\(6.1\)]{aravkin-baraldi-orban-2021}, the user supplies a model \(\omega_j(t) := \omega(t; s_j) \approx \psi_k(s_j + t) + \chi(s_j + t \mid \Delta_k \B)\), and the typical choice is \(\omega_j(t) = \psi_k(s_j + t) + \chi(s_j + t \mid \Delta_k \B) = h(x_k + s_j + t) + \chi(s_j + t \mid \Delta_k \B)\).
The step computed is
\(
  t_j \in \prox{\nu \omega_j}(q)
\)
for certain fixed \(\nu > 0\) and \(q \in \R^n\), i.e.,
\begin{equation}%
  \label{eq:qr-tr-subproblem}
  t_j \in \argmin{t} \ \tfrac{1}{2} \nu^{-1} \|t - q\|^2 + h(x_k + s_j + t) + \chi(s_j + t \mid \Delta_k \B).
\end{equation}
The change of variables \(v := x_k + s_j + t\) allows us to rewrite~\eqref{eq:qr-tr-subproblem} as
\begin{equation}
  \label{eq:prototype}
    v_j \in \argmin{v} \ \tfrac{1}{2} \nu^{-1} \|v - \bar{q}\|^2 + h(v) + \chi(v - x_k \mid \Delta_k \B),
\end{equation}
where \(\bar{q} := x_k + s_j + q\), from which we recover \(t_j = v_j - (x_k + s_j)\).

\subsection{Separable shifted proximal operators}

If \(h\) is separable and the trust region is defined by the \(\ell_{\infty}\)-norm, the problem decomposes and the \(i\)-th component of \(v_j\) is
\begin{equation}
  \label{eq:qr-tr-subproblem-varsub}
\begin{aligned}
  v_{j,i} & \in \argmin{v_i} \ \tfrac{1}{2} \nu^{-1} (v_i - \bar{q}_i)^2 + h_i(v_i) + \chi(v_i - x_{k,i} \mid [-\Delta_k, \Delta_k])
  \\      & =   \argmin{v_i} \ \tfrac{1}{2} \nu^{-1} (v_i - \bar{q}_i)^2 + h_i(v_i) + \chi(v_i \mid [x_{k,i} - \Delta_k, x_{k,i} + \Delta_k]).
\end{aligned}
\end{equation}
Two situations may occur.
In the first situation, \(x_{k,i} - \Delta_k < v_{j,i} < x_{k,i} + \Delta_k\), so that
\(
  v_{j,i} \in \prox{\nu h_i}(\bar{q}_i),
\)
i.e.,
\[
  t_{j,i} \in \prox{\nu h_i}(\bar{q}_i) - (x_{k,i} + s_{j,i}).
\]
In the second situation, at least one unconstrained solution lies outside of \([x_{k,i} - \Delta_k, x_{k,i} + \Delta_k]\), so that constrained global minima of~\eqref{eq:qr-tr-subproblem-varsub} are either one or both bounds, and/or unconstrained local minima that lie between the bounds.

When \(h\) is convex, the constrained solution is the feasible point nearest the unique unconstrained global solution, i.e.,
\[
  v_{j,i} \in \proj{[x_{k,i} - \Delta_k, x_{k,i} + \Delta_k]}(\prox{\nu h_i}(\bar{q}_i)),
\]
i.e.,
\[
  t_{j,i} \in \proj{[x_{k,i} - \Delta_k, x_{k,i} + \Delta_k]}(\prox{\nu h_i}(\bar{q}_i)) - (x_{k,i} + s_{j,i}).
\]

\begin{example}[$\ell_{1/2}^{1/2}$ pseudonorm]
  Consider $\psi(s) = \|s \|_{1/2}^{1/2} = \sum_j |s_j|^{1/2}$.
  When the trust-region bounds are inactive, \citet{cao2013l12} express the solution of~\eqref{eq:qr-tr-subproblem-varsub}
  as
  \begin{align*}
    v_{j,i} = \begin{cases}
      \phantom{-}\tfrac23 |\bar q_i|\left( 1 + \cos \left( \tfrac23 \pi - \tfrac23 \mu_\lambda (\bar q_i)\right) \right) &\quad \phantom{|}\bar q_i\phantom{|} > p (\lambda)\\
      \phantom{-}0 & \quad |\bar q_i| \leq p(\lambda)\\
      -\tfrac23 |\bar q_i|\left( 1 + \cos \left( \tfrac23 \pi - \tfrac23 \mu_\lambda (\bar q_i)\right) \right) &\quad \phantom{|}\bar q_i\phantom{|} < - p (\lambda)
    \end{cases}
  \end{align*}
  where
  \begin{align*}
    \mu_\lambda(\bar q_i) \coloneqq \arccos\left( \frac{\lambda}{4} \left( \frac{|\bar q_i|}{3}\right)^{-3/2} \right), \qquad p(\lambda) \coloneqq \frac{54^{1/3}}{4}(2\lambda)^{2/3}.
  \end{align*}
  When the trust-region constraint is active,
  \citet{cao2013l12} state that the above yields the inflection points of~\eqref{eq:qr-tr-subproblem-varsub}.
  We simply check the inflection points as well as the bounds.
  If the inflection points are within the bounds, we choose the minimum; if not, we select the minimum value of the cost function at the bounds.
\end{example}

\subsection{Nonseparable shifted proximal operators for convex \boldmath{\(h\)}}

In this section we consider examples of nonseparable shifted proximal operators.
The starting point is~\eqref{eq:prototype} where we assume that $h$ is closed, proper, and convex.
We rewrite
\[
  \chi(v - x \mid \Delta \B) = \sup_z \, \langle v - x, z \rangle - \sigma_{\Delta \B} (z),
\]
where we write \(x\) and \(\Delta\) instead of \(x_k\) and \(\Delta_k\) for simplicity, and where the support function
\[
  \sigma_{\Delta \B} (z) := \sup_d \, \langle d, z \rangle + \chi(d\mid \Delta \B).
\]
We substitute into~\eqref{eq:prototype} and obtain the saddle point problem
\begin{equation}%
  \label{eq:saddle}
  \min_v \sup_z \, \tfrac{1}{2} \nu^{-1} \|v - \bar{q}\|^2 + h(v) + \langle v - x, z \rangle - \sigma_{\Delta \B} (z).
\end{equation}
The objective of~\eqref{eq:saddle} is convex in \(v\) and concave in \(z\).
The saddle-point conditions can be written
\begin{align*}
  0 & \in \nu^{-1} (v - \bar{q}) + \partial h(v) + z = \nu^{-1} (v - (\bar{q} - \nu z)) + \partial h(v) \\
  0 & \in v - x - \partial \sigma_{\Delta \B}(z).
\end{align*}
The first condition implies that $v \in \prox{\nu h}(\bar{q} - \nu z)$.
By convexity of \(h\), \(v\) is unique so that we are left with
\begin{equation}
\label{eq:genproxcond}
  0 \in v - x - \partial \sigma_{\Delta \B}(z),
  \quad \text{where} \quad
  \prox{\nu h}(\bar{q} - \nu z) = \{v\}.
\end{equation}

\subsubsection{Special case: \boldmath{\(\ell_2\)}-norm}

For $h(\cdot) := \lambda\|\cdot\|_2$,
\begin{equation}%
  \label{eq:2-prox}
  \prox{\nu \lambda \|\cdot\|_2}(y) =
  \begin{cases}
    0 & \text{ if } \|y\| \leq \nu \lambda \\
    \left(1 - \frac{\nu\lambda }{\|y\|_2}\right) y & \text{ if } \|y\| > \nu \lambda
  \end{cases}.
\end{equation}

We now show how to solve~\eqref{eq:prototype} by converting~\eqref{eq:genproxcond} to a scalar root finding problem.
For given \(z\), let
\[
  \zeta = \zeta(z) := \|\bar{q} - \nu z\|_2.
\]
There are two possibilities.

\noindent
\textbf{Case A}:
If $\zeta \leq \nu \lambda$,~\eqref{eq:2-prox} yields 
\[
  \prox{\nu\lambda \|\cdot\|_2}(\bar{q} - \nu z) = \{v\} = \{0\}.
\]
The optimal value of~\eqref{eq:prototype} in this case is $\tfrac{1}{2} \nu^{-1} \|\bar{q}\|^2$.

\noindent
\textbf{Case B}:
If $\zeta > \nu \lambda$,~\eqref{eq:2-prox} yields 
\begin{equation}%
  \label{eq:prox-case2}
  \prox{\nu \lambda \|\cdot\|_2}(\bar{q} - \nu z) =
  \{v\} =
  \left\{ \left(1- \frac{\nu\lambda}{\zeta }\right)(\bar{q} - \nu z) \right\},
\end{equation}
and~\eqref{eq:genproxcond} becomes
\begin{align*}
  0 & \in x - \left(1- \frac{\nu\lambda}{\zeta}\right)(\bar{q} - \nu z) + \partial \sigma_{\Delta \B}(z) \\
    & = (\zeta - \nu\lambda)\frac{\nu}{\zeta}\left(z - \left(\frac{1}{\nu}\bar{q} - \frac{\zeta}{\nu(\zeta -\nu\lambda)}x\right) \right)  + \partial \sigma_{\Delta \B}(z),
\end{align*}
which we interpret as
\begin{equation}%
  \label{eq:z-of-zeta}
  z = z(\zeta) :=
  \prox{\frac{\zeta}{\nu(\zeta-\nu\lambda)}\sigma_{\Delta \B} } \left(\frac{1}{\nu}\bar{q} - \frac{\zeta}{\nu(\zeta -\nu\lambda)}x\right).
\end{equation}
Recall that \citep[Theorem~\(6.46\)]{beckFO}
\begin{equation}%
  \label{eq:prox-support}
  \prox{\alpha \sigma_{\Delta \B}}(y) = y - \alpha \proj{\Delta \B}(\alpha^{-1} y),
  \quad (\alpha > 0).
\end{equation}
Therefore, the projection into \(\Delta \B\) must be computable.
In our implementation, we use \(\B = \B_\infty\).

We may now search for \(\zeta\) such that
\begin{equation}%
  \label{eq:root-finding}
  g(\zeta) := \zeta - \|\bar{q} - \nu z(\zeta)\|_2 = 0.
\end{equation}
Because projections into convex sets are Lipschitz continuous, so is \(g\) over \((\nu \lambda, +\infty)\).

Since~\eqref{eq:prototype} is strongly convex, there is a unique solution, and so \(g\) has at most one root such that $\zeta > \nu \lambda$.
Any such root of \(g\) yields \(v\) given by~\eqref{eq:prox-case2} and \(z(\zeta)\) given by~\eqref{eq:z-of-zeta} that jointly satisfy~\eqref{eq:genproxcond}.
If \(g\) has no such root, the Case~A must occur.

The combination of~\eqref{eq:z-of-zeta} and~\eqref{eq:prox-support} yields
\begin{equation}%
  \label{eq:q-nuz}
  \bar{q} - \nu z(\zeta) =
  \frac{\zeta}{\zeta - \nu \lambda} \left[ x + \proj{\Delta \B} \left( \frac{\zeta - \nu \lambda}{\zeta} \bar{q} - x \right) \right].
\end{equation}

As \(\zeta \uparrow \infty\), \((\zeta - \nu \lambda) / \zeta \uparrow 1\), and by continuity, the term between square brackets in~\eqref{eq:q-nuz} converges to \(x + \proj{\Delta \B}(\bar{q} - x)\).
Therefore, \(\|\bar{q} - \nu z(\zeta)\|_2 \to \|x + \proj{\Delta \B}(\bar{q} - x)\|_2\) and for sufficiently large \(\zeta\), we must have \(g(\zeta) > 0\).

To study \(g(\zeta)\) as \(\zeta \downarrow \nu \lambda\), we consider several mutually-exclusive cases.

\begin{enumerate}
  \item\label{itm:x-not-in-B} If \(x \not \in \Delta \B\), then, \(\proj{\Delta \B}(-x) \neq -x\).
  As \(\zeta \downarrow \nu \lambda\), \((\zeta - \nu \lambda) / \zeta \downarrow 0\), and by continuity, the term between square brackets converges to \(x + \proj{\Delta \B}(-x) \neq 0\).
  Therefore, \(\|\bar{q} - \nu z(\zeta)\|_2 \to \infty\) and for sufficiently small \(\zeta\), we must have \(g(\zeta) < 0\).

  \item\label{itm:x-interior} Consider next the case where \(x \in \interior \Delta \B\).
  For \(\zeta\) sufficiently close to \(\nu \lambda\),
  \begin{equation}%
    \label{eq:proj-inside}
    \proj{\Delta \B} \left(\frac{\zeta - \nu \lambda}{\zeta} \bar{q} - x \right) = \frac{\zeta - \nu \lambda}{\zeta} \bar{q} - x,
  \end{equation}
  and \(\bar{q} - \nu z(\zeta) = \bar{q}\), i.e., \(z(\zeta) = 0\).
  In this case,
  \begin{enumerate}
    \item\label{itm:q>nl} if \(\|\bar{q}\|_2 > \nu \lambda\), then \(g(\zeta) < 0\) for \(\zeta\) close enough to \(\nu \lambda\),
    \item\label{itm:q<=nl} if \(\|\bar{q}\|_2 \leq \nu \lambda\), then \(g(\zeta) > 0\) for all \(\zeta > \nu \lambda\);
  \end{enumerate}

  \item\label{itm:x=delta} If \(\|x\|_\infty = \Delta\) and \(\proj{\Delta \B}(\bar{q} - x) = -x\), then \(\proj{\Delta \B}(\alpha \bar{q} - x) = -x\) for any \(\alpha > 0\).
  In this case, the term between square brackets in~\eqref{eq:q-nuz} is always zero, and \(\bar{q} - \nu z(\zeta) = 0\).
  Thus for all \(\zeta > \nu \lambda\), \(g(\zeta) = \zeta > 0\).

  \item\label{itm:x=delta-noproj} If \(\|x\|_\infty = \Delta\) but \(\proj{\Delta \B}(\bar{q} - x) \neq -x\), there are two possible situations.
  Either the ray \(\alpha \bar{q} - x\) intersects \(\interior \Delta \B\), or it does not.
    If it does,~\eqref{eq:proj-inside} occurs for all \(\zeta\) sufficiently close to \(\nu \lambda\), \(\bar{q} - \nu z(\zeta) = \bar{q}\), and cases~\ref{itm:q>nl}--\ref{itm:q<=nl} apply.
  If it does not, we have from Lipschitz continuity that
  \[
    \hspace{-2em}
    \left\| x + \proj{\Delta \B} \left(\frac{\zeta - \nu \lambda}{\zeta} \bar{q} - x \right) \right\|_2 \! =
    \left\| \proj{\Delta \B} \left(\frac{\zeta - \nu \lambda}{\zeta} \bar{q} - x \right) - \proj{\Delta \B}(-x) \right\|_2 \! \leq
    \frac{\zeta - \nu \lambda}{\zeta} \|\bar{q}\|_2.
  \]
  Thus, \(\|\bar{q} - \nu z(\zeta)\|_2 \leq \|\bar{q}\|_2\), and
  \begin{enumerate}
    \item\label{itm:q>nl2} if \(\|\bar{q}\|_2 > \nu \lambda\), then \(g(\zeta) \geq \zeta - \|\bar{q}\|_2 > 0\) for \(\zeta > \|\bar{q}\|_2\), and so there may exist a root in \((\nu \lambda, \|\bar{q}\|_2]\).
      By~\eqref{eq:q-nuz}, and the fact that \(\|y\|_2 \leq \sqrt{n} \|y\|_\infty\) for all \(y\), we also have
      \[
        \hspace{-2.5em}
        \|\bar{q}\ - \nu z(\zeta)\|_2 \leq
        \frac{\zeta}{\zeta - \nu \lambda} \left( \|x\|_2 + \left\| \proj{\Delta \B} \left(\frac{\zeta - \nu \lambda}{\zeta} \bar{q} - x \right)  \right\|_2 \right) \leq
        \frac{(\|x\|_2 +  \Delta \sqrt{n}) \zeta}{\zeta - \nu \lambda},
      \]
      so that \(g(\zeta) > 0\) for \(\zeta > \nu \lambda + 2 \Delta \sqrt{n}\).
      Thus, the search interval may potentially be reduced to \((\nu \lambda, \min(\nu \lambda + \|x\|_2 + \Delta \sqrt{n}, \|\bar{q}\|_2)]\).
    \item\label{itm:q<=nl2} if \(\|\bar{q}\| \leq \nu \lambda\), then \(g(\zeta) > 0\) for all \(\zeta > \nu \lambda\).
  \end{enumerate}
\end{enumerate}

Thus, in cases~\ref{itm:x-not-in-B} and~\ref{itm:q>nl}, a root is guaranteed to exist in \((\nu \lambda, +\infty)\) and can be found by a bisection method.
The upper bound may be found by observing that~\eqref{eq:q-nuz} implies
\[
  \|\bar{q} - \nu z(\zeta)\| \leq \frac{\zeta}{\zeta - \nu} (\|x\| + \Delta),
\]
so that
\[
  g(\zeta) = \zeta - \|\bar{q} - \nu z(\zeta)\| \geq
  \zeta - \frac{\zeta}{\zeta - \nu \lambda} (\|x\| + \Delta),
\]
and \(g(\zeta) > 0\) as soon as \(\zeta > \|x\| + \Delta + \nu \lambda\).

In case~\ref{itm:x-not-in-B}, a lower bound follows by applying the reverse triangle inequality to~\eqref{eq:q-nuz}:
\[
  \|\bar{q} - \nu z(\zeta)\| \geq \frac{\zeta}{\zeta - \nu \lambda} (\|x\| - \Delta),
\]
so that \(g(\zeta) < 0\) as soon as \(\zeta < \nu \lambda + \|x\| - \Delta\).

In case~\ref{itm:q>nl}, the lower bound is simply \(\|\bar{q}\|\).

In cases~\ref{itm:q<=nl},~\ref{itm:x=delta} and~\ref{itm:q<=nl2}, there can be no root in \((\nu \lambda, +\infty)\) and Case~A must occur.

Only case~\ref{itm:q>nl2} requires a root search, with or without sign change.
If no root exists in the search interval, Case~A must occur.

\subsubsection{Special case: Group lasso}

The group lasso penalty is a sum of \(\ell_2\)-norms of subvectors:
\[
  R_g(x) = \sum_i \|x_{[i]}\|_2,
\]
where the $x_{[i]}$ partition $x$ into non-overlapping groups.
The proximal operator of $R_g$ consists in applying~\eqref{eq:2-prox} to each subvector:
\begin{equation}
  \label{eq:g-prox}
  \prox{\lambda R_g}(z)_{[i]}  =  \left(1 - \frac{\lambda}{\|z_{[i]}\|_2}\right)_+ z_{[i]}.
\end{equation}
Thus, the strategy of the previous section may be applied to each group.

  %!TEX root = nls-composite.tex
\section{Implementation and numerical experiments}%
\label{sec:numerical}

Our implementation of Algorithm~\(3.1\) of \citep{aravkin-baraldi-orban-2021} and \Cref{alg:lm-regularized} for~\eqref{eq:nlls} employs \citeauthor*{aravkin-baraldi-orban-2021}'s quadratic regularization method, named {\tt R2}, to compute a step.
{\tt R2} may be viewed as an implementation of the proximal gradient method with adaptive step size.
The trust-region variant uses \(\Delta_0 = 1\), terminates the outer iterations as soon as \(\xi(\Delta_k; x_k, \nu_k)^{1/2} < \epsilon_a + \epsilon_r \, \xi_{1,0}^{1/2}\), where \(\epsilon_a > 0\) and \(\epsilon_r > 0\) are an absolute and a relative tolerance, and \(\xi_{1,0}\) is the value of \(\xi_1\) observed at the first iteration.
A round of inner iterations terminates as soon as
\begin{equation}
  \label{eq:inner_stop}
  \hat{\xi}_1(x_k + s, \hat{\sigma}_k) \leq
  \begin{cases}
    10^{-1} & \text{if } k = 0, \\
    \max(\epsilon, \min(10^{-1}, \xi_1(x_k, \sigma_k) / 10)) & \text{if } k > 0,
  \end{cases}
\end{equation}
where \(\hat{\sigma}_k\) and \(\hat{\xi}_1\) are the regularization parameter and first-order stationarity measure used inside {\tt R2}.
In \Cref{alg:lm-regularized}, we use \(\sigma_0 = 0.01\), and we terminate the outer iterations as soon as \(\xi_1(x_k, \sigma_k)^{1/2} < \epsilon\) for a tolerance \(\epsilon > 0\) because \(\sigma_{\max}\) is unknown.
The inner iterations stop in the same manner as \eqref{eq:inner_stop}.
All algorithms are implemented in the Julia language \citep{bezanson-edelman-karpinski-shah-2017} version~\(1.8\) as part of the RegularizedOptimization.jl package \citep{baraldi-orban-regularized-optimization-2022}.
The shifted proximal operators are implemented in the ShiftedProximalOperators.jl package \citep{baraldi-orban-shifted-proximal-operators-2022}, while test problems are in the RegularizedProblems.jl package \citep{baraldi-orban-regularized-problems-2022}.
By contrast with the numerical results of \citet{aravkin-baraldi-orban-2021}, test cases are explicitly implemented as nonlinear least-squares problems, with access to the residual \(F(x)\) and its Jacobian, and not simply the gradient of \(f(x) := \tfrac{1}{2} \|F(x)\|_2^2\).
Jacobian-vector and transposed-Jacobian-vector products are either implemented manually or computed via forward \citep{forwarddiff} and reverse \citep{reversediff} automatic differentiation, respectively.

We perform comparisons with {\tt R2} and with the quasi-Newton trust-region method of \citet{aravkin-baraldi-orban-2021}, named {\tt TR}, and which does not exploit the structure of~\eqref{eq:nlls}.
The trust region is defined in \(\ell_\infty\)-norm and the quadratic model uses a limited-memory SR1 Hessian approximation with memory \(5\).
In all experiments, we use \(\psi(s; x) := h(x + s)\).

A direct comparison between the four methods is difficult because {\tt LM} and {\tt LMTR} do not utilize the same gradient; they instead take Jacobian-vector and transposed-Jacobian-vector products.
To provide a meaningful comparison, in the tables below, we state:
1) the number of objective (or residual) evaluations;
2) the number of gradient evaluations (for {\tt R2} and {\tt TR}) ;
3) the number of transposed-Jacobian-vector products (for {\tt LM} and {\tt LMTR}), listed under gradient evaluations;
4) the solve time in seconds.
% Hence, the number of Jacobian vector products ($Jv$) and transpose-Jacobian vector products ($J^Tv$) are explicitly subproblem-dependent.
Our rationale is as follows.
{\tt LM} and {\tt LMTR} pass a model to {\tt R2} whose objective evaluation requires one $Jv$, and whose gradient uses a $Jv$ and a $J^Tv$.
Note however that the latter $Jv$ can be cached and reused.
Thus, {\tt R2} requires one $Jv$ at each iteration, and additionally one $J^Tv$ at each successful iteration.
% It is therefore difficult to compare against algorithms directly, unless we know how gradients and $Jv$/$J^Tv$'s will be computed.

In the figures, we plot descent as a function of residual/objective evaluations.
% In forward mode automatic differentiation, \(\text{time}(Jv) \leq \frac{5}{2} \text{time}(F)\) and \(\text{mem}(Jv) \leq 2 \text{mem}(F)\).
% In reverse mode automatic differentiation, \(\text{time}(J^T u) \leq 4 \text{time}(F)\) and \(\text{mem}(J^T u) \leq 2 \text{mem}(F)\).
% This context reveals that residual evaluations are more expensive than Jacobian-vector products.

The summary of the numerical results below is that exploiting the least-squares structure results in a large reduction in outer iterations.
However, solving the subproblem with a first-order method such as {\tt R2} consumes many $J^T v$.
Our experiments thus highlight the need for more sophisticated subproblem solvers dedicated to~\eqref{eq:lls-lm-regularized} and~\eqref{eq:tr-subproblem}.

\subsection{Group LASSO}

In the group-LASSO problem, we observe noisy data from a linear system \(b = A x_{T} + \varepsilon\), where \(A \in \R^{m \times n}\) has orthonormal rows, and
\(x_{T}\) is segmented into $g$ groups with every element in that group set to one of $\{ -1, 0, 1\}$.
The group-LASSO problem is given by
\begin{equation}
  \label{eq:lasso}
  \min_x \tfrac{1}{2} \|Ax - b\|_2^2 + \lambda \|x\|_{1,2},
\end{equation}
where \(h(x) = \|x\|_{1,2} = \sum_{i = 1}^{g}\|x_{[i]}\|_2\), i.e., the sum of the \(\ell_2\)-norm of the groups.
The groups consisting of all zeros are labeled as ``inactive'', whereas the groups set to $\pm 1$ are ``active''.
We let $m = 512$, $n = 200$ and $\lambda = 10^{-2}$.
We designate $g=5$ such groups of possible 16 (each with 32 elements) to be ``active''.
The noise \(\varepsilon \sim \mathcal{N}(0, 0.01)\).
Thus~\eqref{eq:lasso} has the form~\eqref{eq:nlls}, where \(F(x) = A x - b\).
We set the absolute and relative exit tolerances to be $10^{-4}$ each.
The number of subproblem iterations is capped at 100 for each outer iteration.

\Cref{fig:bpdn} shows the solutions of each algorithm, and \Cref{tab:bpdn} reports the statistics.
All algorithms arrive at approximately the same solution.
{\tt R2} requires the most function evaluations whereas the others require about the same.
\Cref{tab:bpdn} suggests that a tradoff exists between the number of proximal operator evaluations and the number of gradient/Jacobian-vector evaluations.
{\tt TR} takes many proximal iterations, whereas {\tt LMTR} and {\tt LM} take far fewer.
This tradeoff is further exemplified in the next test cases.

\begin{table}[ht]
	\captionsetup{position=top}
  \caption{Group-LASSO~\eqref{eq:lasso} statistics for {\tt R2}, {\tt TR}, {\tt LM}, and {\tt LMTR}, and $h(x) = \|x\|_{1,2}$.
  The $\# \nabla f$ is the number of $J^Tv$ for {\tt LM} and {\tt LMTR}.}
	\label{tab:bpdn}
	\begin{center}
	  \begin{longtable}{rrrrrrrrr}
  \hline
  Alg & $ f(x) $ & $ h(x) $ & $ (f+h)(x) $ & $ \|x - x_{T}\|_2$ & \# $ f$ & \# $ \nabla f $ & \# $ \prox{}$ & $ t $ (s) \\\hline
  \endfirsthead
  \hline
  Alg & $ f(x) $ & $ h(x) $ & $ (f+h)(x) $ & $ \|x - x_{T}\|_2$ & \# $ f$ & \# $ \nabla f $ & \# $ \prox{}$ & $ t $ (s) \\\hline
  \endhead
  \hline
  \multicolumn{9}{r}{{\bfseries Continued on next page}}\\
  \hline
  \endfoot
  \endlastfoot
  R2 &       0.00 &       0.26 &       0.27 &       0.45 & 113 & 67 & 113 & 0.02 \\
  TR &       0.00 &       0.26 &       0.27 &       0.47 & 17 & 17 & 339 & 2.56 \\
  LM &       0.00 &       0.26 &       0.27 &       0.46 & 10 & 647 & 265 & 0.05 \\
  LMTR &       0.00 &       0.26 &       0.27 &       0.46 & 5 & 327 & 130 & 0.98 \\\hline
\end{longtable}

	\end{center}
\end{table}

\begin{figure}[ht]
  \centering
  \subfloat[Signal: {\tt R2}]{\label{fig:bpdn_r2_x}\includetikzgraphics[width=.45\linewidth]{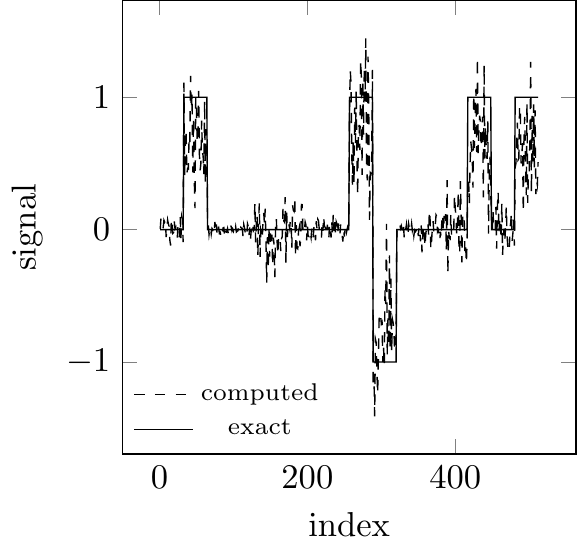}}
  \hfill
  \subfloat[Signal: {\tt TR}]{\label{fig:bpdn_tr_x}\includetikzgraphics[width=.45\linewidth]{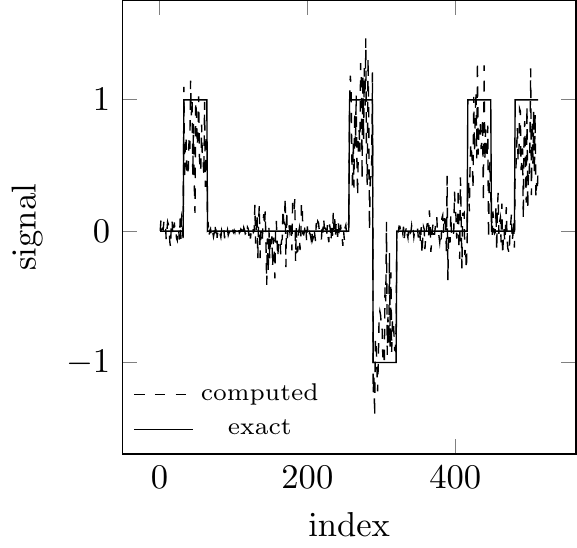}}
  \\
  \subfloat[Signal: {\tt LM}]{\label{fig:bpdn_lm_x}\includetikzgraphics[width=.45\linewidth]{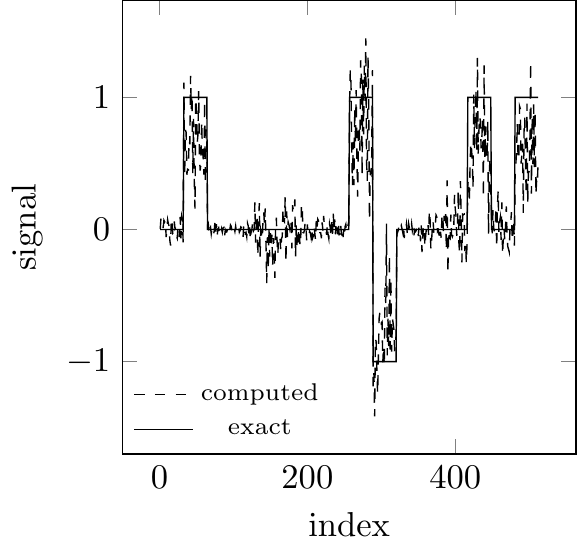}}
  \hfill
  \subfloat[Signal: {\tt LMTR}]{\label{fig:bpdn_lmtr_x}\includetikzgraphics[width=.45\linewidth]{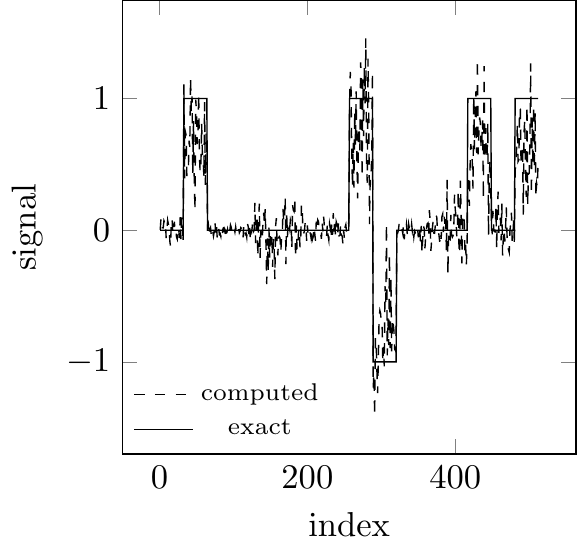}}
  \caption{%
    Group-LASSO~\eqref{eq:lasso} solutions with {\tt R2}, {\tt TR}, {\tt LM}, and {\tt LMTR} with \(h=\lambda\|\cdot\|_{1,2}\).
  }%
  \label{fig:bpdn}
 \end{figure}

We additionally plot descent history in \Cref{fig:objdec-grouplasso}.
The plots are roughly similar, with the trust region methods {\tt TR} and {\tt LMTR} performing the best.

\subsection{Nonlinear support vector machine}

We now solve an image recognition problem of the form~\eqref{eq:nlls}, where
\begin{equation}%
  \label{eq:svm_nonlin}
  F(x) = \mathbf{1} - \tanh(b \odot \langle A, x \rangle),
  \quad
  \mathbf{1} = [1, \ldots, 1]^T,
\end{equation}
\(A\in \R^{m\times n}\), $n = 784$ is the vectorized image size, the number of images is $m = 13007$ in the training set and $m=2163$ in the test set, and \(\odot\) denotes the elementwise product between vectors.
We wish to use this nonlinear SVM to classify digits of the MNIST dataset as either 1 or 7, with all other digits removed.
We additionally impose the condition that the support is sparse, and therefore use $h(x) = \|x\|_{1/2}^{1/2}$ as a regularizer.
Hence, our overall problem is
\begin{equation}
  \label{eq:svmfull}
  \min_x \ \tfrac{1}{2} \| \mathbf{1} - \tanh(b \odot \langle A, x \rangle) \|^2 + \lambda\|x\|_{1/2}^{1/2}
\end{equation}
with $\lambda = 10^{-1}$.
We initialize the problem at $x = \mathbf{1}^n$ so that approximately 50\% of the data is misclassified.
We set the stopping tolerances again to $10^{-4}$ and the maximum number of inner iterations to $100$.

\Cref{fig:svm} shows the solution map of each algorithm, which can be interpreted as the pixels most important in determining whether the image is indeed a 1 or 7.
All algorithms produce a sparse solution; only about ~8\% of pixels in the support vector are nonzero.
The problem is large and nonconvex; hence, the final solutions share pixels but altogether, they are different.
This can be seen in \Cref{tab:svm}, which reports the statistics.
{\tt R2} again requires the most function evaluations.
{\tt TR} requires about 10 times more than {\tt LM} and {\tt LMTR}.
We again observe that a tradoff exists between number of proximal operator evaluations and the number of gradient/Jacobian-vector evaluations.
% {\tt TR} takes many proximal iterations, whereas {\tt LMTR} and {\tt LM} less proximal evaluations but many more $Jv$ iterations.
Here, proximal operator evaluations are cheaper than gradient or $Jv$ evaluations, so wallclock time is higher for {\tt LM} and {\tt LMTR}.

We plot descent history against number of function/residual iterations in \Cref{fig:objdec-svm}.
Here we can see {\tt LM} and {\tt LMTR} performing the best in terms of descent.

\begin{figure}[ht]
 \centering
	\subfloat[Signal: {\tt R2}]{\label{fig:svm_r2_x}\includetikzgraphics[width=.50\linewidth]{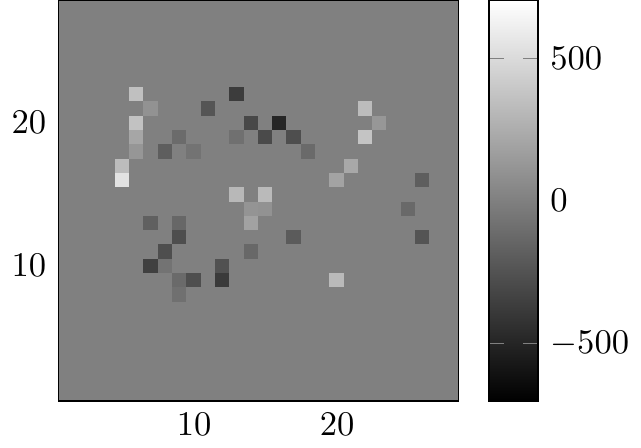}}
  \subfloat[Signal: {\tt TR}]{\label{fig:svm_tr_x}\includetikzgraphics[width=.50\linewidth]{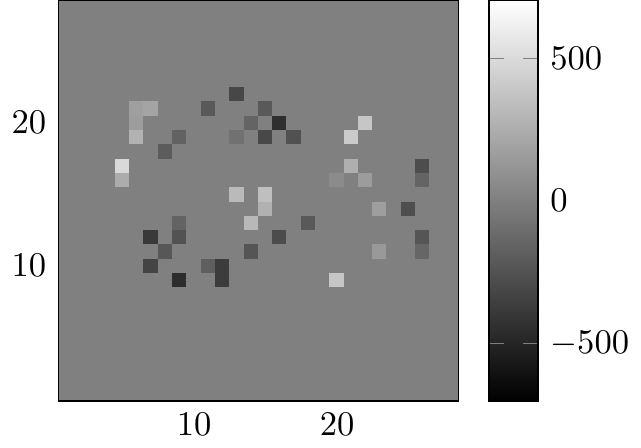}}
 \\
  \subfloat[Signal: {\tt LM}]{\label{fig:svm_lm_x}\includetikzgraphics[width=.50\linewidth]{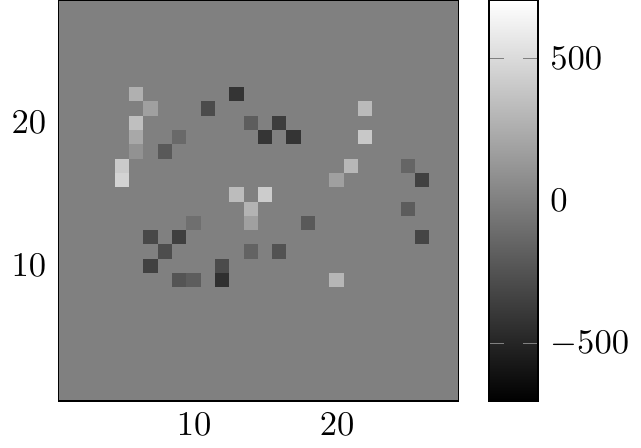}}
	\subfloat[Signal: {\tt LMTR}]{\label{fig:svm_lmtr_x}\includetikzgraphics[width=.50\linewidth]{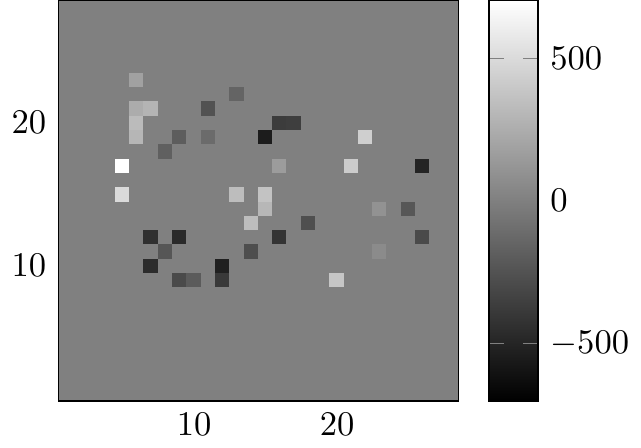}}
	\caption{%
   Nonlinear SVM~\eqref{eq:svmfull} solutions with {\tt R2}, {\tt TR}, {\tt LM}, {\tt LMTR}.
 }%
 \label{fig:svm}
\end{figure}

\begin{table}[ht]
	\captionsetup{position=top}
	\caption{Nonlinear SVM~\eqref{eq:svmfull} statistics for {\tt R2}, {\tt TR}, {\tt LM}, and {\tt LMTR}.
	Training/test error is with respect to the $\ell_2$-norm.}
	\label{tab:svm}
	\begin{center}
	  \begin{longtable}{rrrrrrrrr}
  \hline
  Alg & $ f $ & $ h $ & $ f+h $ & (Train, Test) & \# $f$ & \# $ \nabla f $ & \# $ \prox{}$ & $t $ (s) \\\hline
  \endfirsthead
  \hline
  Alg & $ f $ & $ h $ & $ f+h $ & (Train, Test) & \# $f$ & \# $ \nabla f $ & \# $ \prox{}$ & $t $ (s) \\\hline
  \endhead
  \hline
  \multicolumn{9}{r}{{\bfseries Continued on next page}}\\
  \hline
  \endfoot
  \endlastfoot
  R2 &      57.11 &      66.28 &     123.39 & (99.80, 99.35) & 1359 & 1085 & 1359 & 18.99 \\
  TR &      49.80 &      72.37 &     122.17 & (99.83, 99.26) & 267 & 171 & 10478 & 6.62 \\
  LM &      54.36 &      65.86 &     120.21 & (99.83, 99.35) & 23 & 3567 & 1276 & 24.98 \\
  LMTR &      49.43 &      68.26 &     117.69 & (99.81, 99.12) & 24 & 3925 & 1420 & 44.32 \\\hline
\end{longtable}

	\end{center}
\end{table}

\subsection{FitzHugh-Nagumo inverse problem}

The problem has the form~\eqref{eq:nlls}, with \(F: \R^5 \to \R^{2n+2}\) defined as \(F(x) = (v(x) - \bar{v}(\bar{x}), w(x) - \bar{w}(\bar{x}))\), where \(v(x) = (v_1(x), \ldots, v_{n+1}(x))\) and \(w(x) = (w_1(x), \ldots, w_{n+1}(x))\) are sampled values of discretized functions \(V(t; x)\) and \(W(t; x)\) satisfying the \citet{fitzhugh1955mathematical} and \citet{nagumo1962active} model for neuron activation
\begin{equation}%
  \label{eq:fh-ode}
  \frac{\mathrm{d}V}{\mathrm{d}t} = (V - V^3/3 - W + x_1)x_2^{-1},
  \quad
  \frac{\mathrm{d}W}{\mathrm{d}t} = x_2(x_3 V - x_4 W + x_5),
\end{equation}
parametrized by \(x\).
The sampling is defined by a discretization of the time interval \(t \in [0, \, 20]\) and initial conditions \((V(0), W(0)) = (2, 0)\).
The data \((\bar{v}(x), \bar{w}(x))\) is generated by solving~\eqref{eq:fh-ode} with \(\bar{x} = (0, 0.2, 1, 0, 0)\), which corresponds to a simulation of the \citet*{van1926lxxxviii} oscillator.
In our experiments, we use \(n = 100\) and solve
\begin{equation}
  \label{eq:fh_full}
  \min_x \, \tfrac{1}{2} \|F(x)\|_2^2 + \lambda \|x\|_1,
\end{equation}
where \(h(x) = \lambda\|x\|_{1}\) with $\lambda = 10$ to enforce sparsity in the parameters.
Our absolute stopping criteria is $10^{-2}$, whereas our the relative stopping criteria is set to $10^{-4}$.

The solution found by each solver is given in \Cref{tab:fhx}
{\tt TR} has the correct nonzero parameters, but the values are farther off.
The corresponding simulations are shown in \Cref{fig:fh}; each method is able to fit the data.

\begin{table}[ht]
	\captionsetup{position=top}
  \caption{Final parameters for the FH problem~\eqref{eq:fh_full} found by {\tt R2}, {\tt TR}, {\tt LM}, and {\tt LMTR}.}
	\label{tab:fhx}
	\begin{center}
	  \begin{longtable}{rrrrr}
  \hline
  True & R2 & TR & LM & LMTR \\\hline
  \endfirsthead
  \hline
  True & R2 & TR & LM & LMTR \\\hline
  \endhead
  \hline
  \multicolumn{5}{r}{{\bfseries Continued on next page}}\\
  \hline
  \endfoot
  \endlastfoot
        0.00 &       0.00 &       0.00 &       0.00 &       0.00 \\
        0.20 &       0.26 &       0.33 &       0.25 &       0.25 \\
        1.00 &       0.84 &       0.70 &       0.86 &       0.85 \\
        0.00 &       0.00 &       0.00 &       0.00 &       0.00 \\
        0.00 &       0.00 &       0.00 &       0.00 &       0.00 \\\hline
\end{longtable}

	\end{center}
\end{table}

\begin{figure}[ht]
 \centering
 \subfloat[Simulation: {\tt R2}]{\label{fig:fh_r2_x}\includetikzgraphics[width=.49\linewidth]{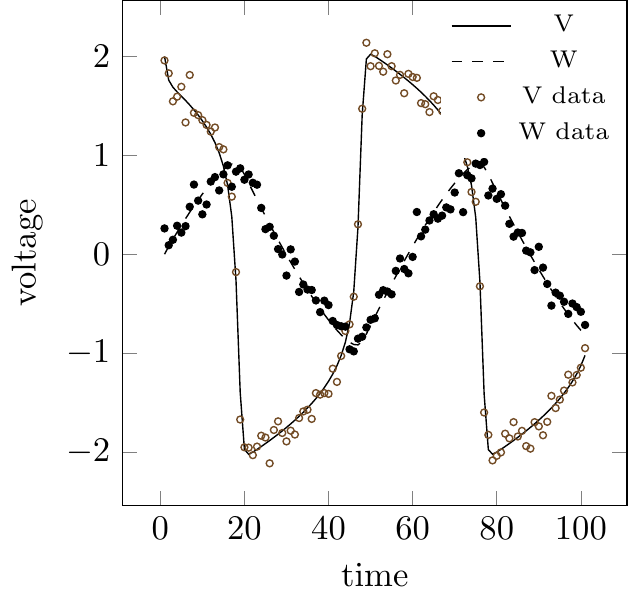}}
 \hfill
 \subfloat[Simulation: {\tt TR}]{\label{fig:fh_tr_x}\includetikzgraphics[width=.49\linewidth]{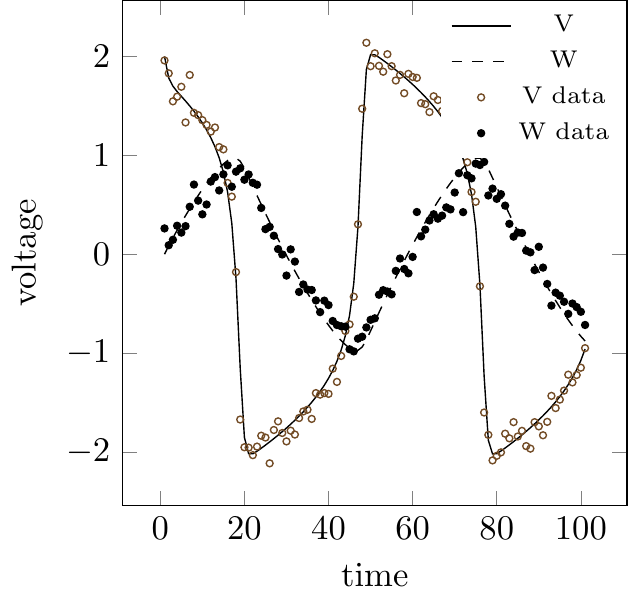}}
 \\
 \subfloat[Simulation: {\tt LM}]{\label{fig:fh_lm_x}\includetikzgraphics[width=.49\linewidth]{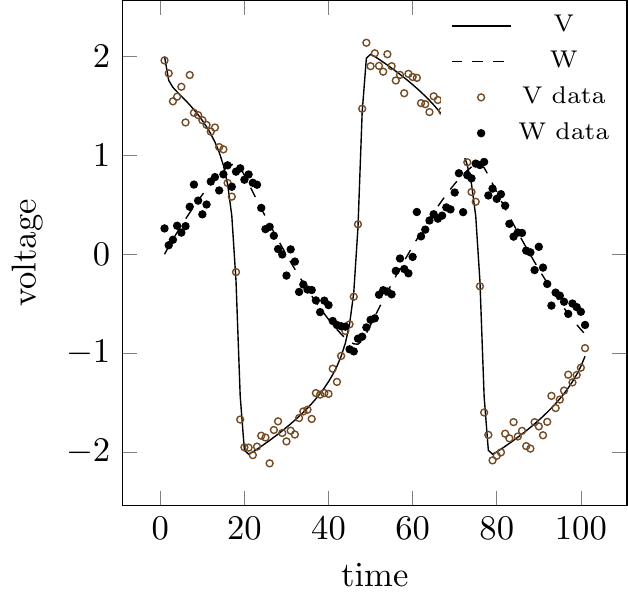}}
 \hfill
 \subfloat[Simulation: {\tt LMTR}]{\label{fig:fh_lmtr_x}\includetikzgraphics[width=.49\linewidth]{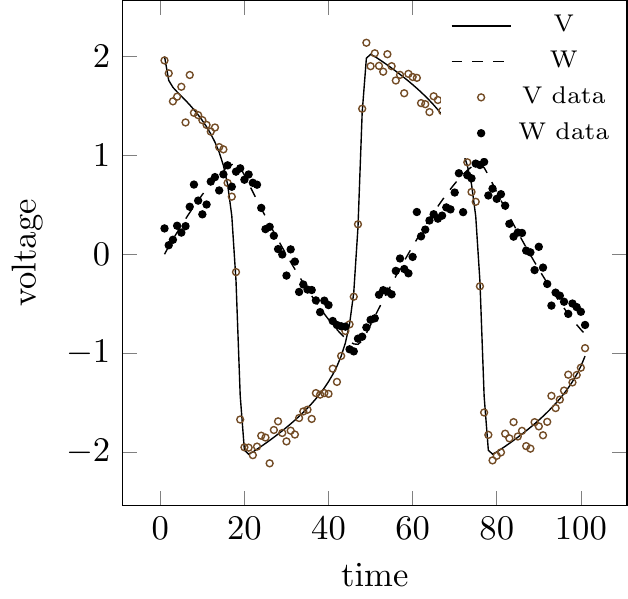}}
 \caption{%
   Simulation of the FH problem~\eqref{eq:fh_full} solutions found by {\tt R2}, {\tt TR}, {\tt LM}, {\tt LMTR}.
 }%
 \label{fig:fh}
\end{figure}

\Cref{tab:fh} reports the statistics for each algorithm, which exhibit the same pattern of results as before.
The final objective values are fairly similar.
{\tt LMTR} uses the smallest amount of objective evaluations, whereas {\tt LM} has a harder time solving \eqref{eq:fh_full}.
Because the gradient of the smooth term in~\eqref{eq:fh_full} is not Lipschitz continuous, we had to set a $\sigma_{\min}$ for both {\tt R2} and {\tt LM}, which increased iteration count.
% \smarttodo{value of $\sigma_{\min}$?}
Similar to the SVM example, we can see that {\tt LM} and {\tt LMTR} take more time than {\tt TR}, which again stems from proximal operators being much cheaper to compute than $Jv$ products for this example.
Notably, {\tt TR} seems to fit the data worse but attain a lower value of the regularizer.

\begin{table}
	\captionsetup{position=top}
  \caption{Statistics for the FH problem~\eqref{eq:fh_full} for {\tt R2}, {\tt TR}, {\tt LM}, and {\tt LMTR}.}
	\label{tab:fh}
	\begin{center}
	  \begin{longtable}{rrrrrrrrr}
  \hline
  Alg & $ f $ & $ h $ & $ f+h $ & $ \|x - x_{T}\|_2$ & \# $f $ & \# $ \nabla f $ & \# $ \prox{}$ & $t $ (s) \\\hline
  \endfirsthead
  \hline
  Alg & $ f $ & $ h $ & $ f_h $ & $ \|x - x_{T}\|_2$ & \# $f $ & \# $ \nabla f $ & \# $ \prox{}$ & $t $ (s) \\\hline
  \endhead
  \hline
  \multicolumn{9}{r}{{\bfseries Continued on next page}}\\
  \hline
  \endfoot
  \endlastfoot
  R2 &       1.24 &      10.91 &      12.15 &       1.58 & 4230 & 3428 & 4230 & 40.40 \\
  TR &       1.87 &      10.31 &      12.17 &       1.93 & 134 & 77 & 2452 & 0.67 \\
  LM &       1.20 &      11.03 &      12.23 &       1.55 & 101 & 4236 & 1402 & 20.17 \\
  LMTR &       1.20 &      11.02 &      12.22 &       1.55 & 32 & 2006 & 741 & 10.50 \\\hline
\end{longtable}

	\end{center}
\end{table}

Finally, \cref{fig:objdec-fh} shows descent of our objective function value against objective function iteration.
{\tt LMTR} again performs the best, whereas {\tt LM} and {\tt TR} were similar in this metric.
This again enunciates the tradeoff between objective, gradient, and proximal operator expense.
Expensive proximal evaluations would be the limiting factor in {\tt TR} and {\tt R2}; one can think of Total Variation regularization as a test case, since the proximal operator is itself a minimization problem.
% \smarttodo{referees are going to ask for it}

\begin{figure}[ht]
  \centering
  \subfloat[Group-Lasso]{\label{fig:objdec-grouplasso}\includetikzgraphics[width=.5\linewidth]{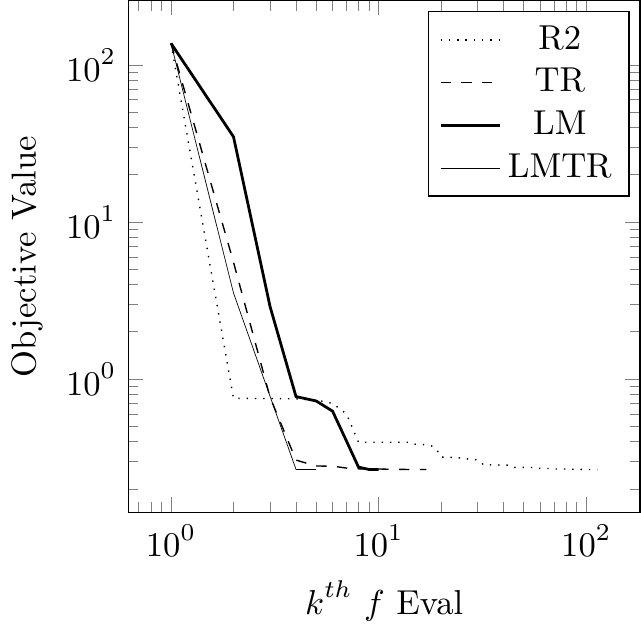}}
  \subfloat[SVM]{\label{fig:objdec-svm}\includetikzgraphics[width=.5\linewidth]{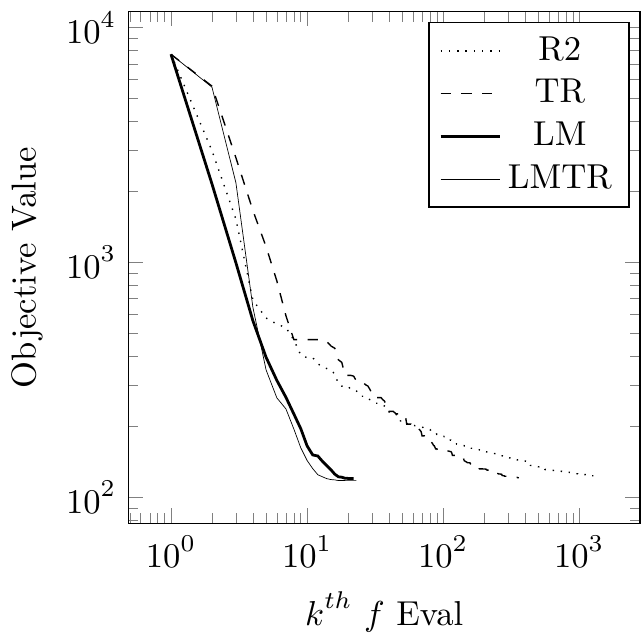}}\\
  \subfloat[FH]{\label{fig:objdec-fh}\includetikzgraphics[width=.5\linewidth]{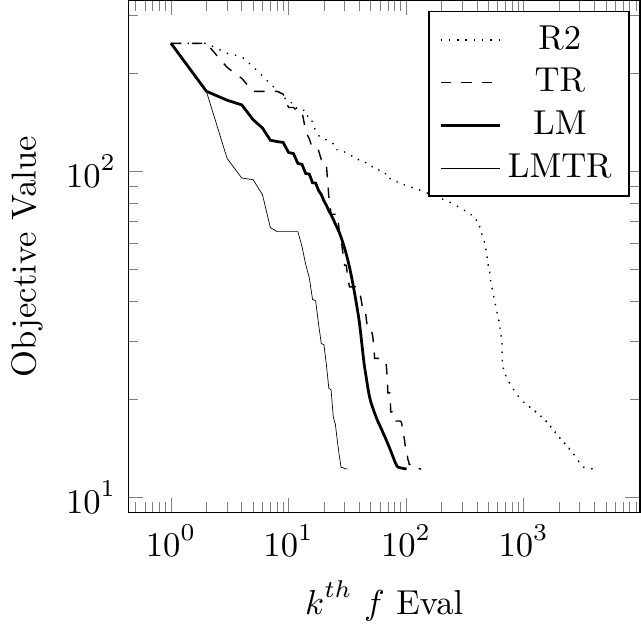}}
  \caption{Objective decrease per objective or residual evaluation. }
\end{figure}

  \section{Discussion}

Similarly to smooth optimization, exploiting the least-squares structure of \(f\) can decrease significantly the number of outer iterations.
The challenge highlighted by our numerical results, which is the subject of ongoing research, is to either identify a closed-form minimizer of~\eqref{eq:lls-lm-regularized} for relevant choices of \(\psi\), or to devise methods that can produce a higher-quality step than \texttt{R2} with fewer transposed-Jacobian-vector products.
As long as the subproblem solver yields a step satisfying \Cref{asm:step-computation}, our convergence properties and worst-case complexity bounds are guaranteed to hold.
Thus, any improvement in the step computation mechanism will immediately translate into a more efficient solver overall.
In ongoing research, we are exploring other improvements, including inexact evaluations of \(f\) and \(\nabla f\), nonmonotone methods, and inexact evaluation of proximal operators.

  \bibliographystyle{abbrvnat}
  \bibliography{abbrv,references}

\begin{thebibliography}{30}
\providecommand{\natexlab}[1]{#1}
\providecommand{\url}[1]{\texttt{#1}}
\expandafter\ifx\csname urlstyle\endcsname\relax
  \providecommand{\doi}[1]{doi: #1}\else
  \providecommand{\doi}{doi: \begingroup \urlstyle{rm}\Url}\fi

\bibitem[Aravkin et~al.(2022)Aravkin, Baraldi, and
  Orban]{aravkin-baraldi-orban-2021}
A.~Aravkin, R.~Baraldi, and D.~Orban.
\newblock \doilink{10.1137/21M1409536}{A proximal quasi-{N}ewton trust-region
  method for nonsmooth regularized optimization}.
\newblock \emph{SIAM J. Optim.}, \penalty0 (2):\penalty0 900--929, 2022.

\bibitem[Bach et~al.(2012)Bach, Jenatton, Mairal, and
  Obozinski]{bach-jenatton-mairal-obozinski-2011}
F.~Bach, R.~Jenatton, J.~Mairal, and G.~Obozinski.
\newblock \doilink{10.1561/2200000015}{\emph{Optimization with
  Sparsity-Inducing Penalties}}, volume~4 of \emph{Foundations and Trends in
  Machine Learning}.
\newblock now publishers, 2012.

\bibitem[Baraldi and
  Orban(2022{\natexlab{a}})]{baraldi-orban-regularized-optimization-2022}
R.~Baraldi and D.~Orban.
\newblock \doilink{10.5281/zenodo.6940313}{{RegularizedOptimization.jl}:
  Algorithms for regularized optimization}.
\newblock
  \url{https://github.com/JuliaSmoothOptimizers/RegularizedOptimization.jl},
  February 2022{\natexlab{a}}.

\bibitem[Baraldi and
  Orban(2022{\natexlab{b}})]{baraldi-orban-regularized-problems-2022}
R.~Baraldi and D.~Orban.
\newblock {RegularizedProblems.jl}: Test cases for regularized optimization.
\newblock
  \url{https://github.com/JuliaSmoothOptimizers/RegularizedProblems.jl},
  February 2022{\natexlab{b}}.

\bibitem[Baraldi and
  Orban(2022{\natexlab{c}})]{baraldi-orban-shifted-proximal-operators-2022}
R.~Baraldi and D.~Orban.
\newblock {ShiftedProximalOperators.jl}: Proximal operators for regularized
  optimization.
\newblock
  \url{https://github.com/JuliaSmoothOptimizers/ShiftedProximalOperators.jl},
  February 2022{\natexlab{c}}.

\bibitem[Beck(2017)]{beckFO}
A.~Beck.
\newblock \doilink{10.1137/1.9781611974997}{\emph{First Order Methods in
  Optimization}}.
\newblock SIAM, Philadelphia, USA, 2017.

\bibitem[Bezanson et~al.(2017)Bezanson, Edelman, Karpinski, and
  Shah]{bezanson-edelman-karpinski-shah-2017}
J.~Bezanson, A.~Edelman, S.~Karpinski, and V.~B. Shah.
\newblock \href{https://doi.org/10.1137/141000671}{Julia: A fresh approach to
  numerical computing}.
\newblock \emph{SIAM Rev.}, 59\penalty0 (1):\penalty0 65--98, 2017.

\bibitem[Bolte et~al.(2014)Bolte, Sabach, and Teboulle]{palm}
J.~Bolte, S.~Sabach, and M.~Teboulle.
\newblock \doilink{10.1007/s10107-013-0701-9}{Proximal alternating linearized
  minimization for nonconvex and nonsmooth problems}.
\newblock \emph{Math. Program.}, \penalty0 (146):\penalty0 459–--494, 2014.

\bibitem[Boţ et~al.(2016)Boţ, Csetnek, and László]{bot-csetnek-laszlo-2016}
R.~I. Boţ, E.~R. Csetnek, and S.~László.
\newblock \doilink{10.1007/s13675-015-0045-8}{An inertial forward–backward
  algorithm for the minimization of the sum of two nonconvex functions}.
\newblock \emph{EURO J. Comput. Optim.}, \penalty0 (4):\penalty0 3--25, 2016.

\bibitem[Cao et~al.(2013)Cao, Sun, and Xu]{cao2013l12}
W.~Cao, J.~Sun, and Z.~Xu.
\newblock Fast image deconvolution using closed-form thresholding formulas of
  lq (q = 12, 23) regularization.
\newblock \emph{Journal on visual communication and image representation},
  24\penalty0 (1), 2013.

\bibitem[Cartis et~al.(2011)Cartis, Gould, and Toint]{cartis-gould-toint-2011}
C.~Cartis, N.~I.~M. Gould, and {\relax Ph}.~L. Toint.
\newblock \doilink{10.1137/11082381X}{On the evaluation complexity of composite
  function minimization with applications to nonconvex nonlinear programming}.
\newblock \emph{SIAM J. Optim.}, 21\penalty0 (4):\penalty0 1721--1739, 2011.

\bibitem[Combettes and Pesquet(2011)]{combettes2011proximal}
P.~L. Combettes and J.-C. Pesquet.
\newblock \doilink{10.1007/978-1-4419-9569-8_10}{Proximal splitting methods in
  signal processing}.
\newblock In \emph{Fixed-point algorithms for inverse problems in science and
  engineering}, pages 185--212. Springer, 2011.

\bibitem[Conn et~al.(2000)Conn, Gould, and Toint]{conn-gould-toint-2000}
A.~R. Conn, N.~I.~M. Gould, and {\relax Ph}.~L. Toint.
\newblock \doilink{10.1137/1.9780898719857}{\emph{Trust-Region Methods}}.
\newblock Number~1 in MOS-SIAM Series on Optimization. SIAM, Philadelphia, USA,
  2000.

\bibitem[Donoho(2006)]{donoho2006compressed}
D.~L. Donoho.
\newblock \doilink{10.1109/TIT.2006.871582}{Compressed sensing}.
\newblock \emph{IEEE T. Inform. Theory}, 52\penalty0 (4):\penalty0 1289--1306,
  2006.

\bibitem[FitzHugh(1955)]{fitzhugh1955mathematical}
R.~FitzHugh.
\newblock \doilink{10.1007/BF02477753}{Mathematical models of threshold
  phenomena in the nerve membrane}.
\newblock \emph{B. Math. Biophys.}, 17\penalty0 (4):\penalty0 257--278, 1955.

\bibitem[Fukushima and Mine(1981)]{fukushima-mine-1981}
M.~Fukushima and H.~Mine.
\newblock \doilink{10.1080/00207728108963798}{A generalized proximal point
  algorithm for certain non-convex minimization problems}.
\newblock \emph{Int. J. Syst. Sci.}, 12\penalty0 (8):\penalty0 989--1000, 1981.

\bibitem[Grapiglia et~al.(2016)Grapiglia, Yuan, and
  Yuan]{grapiglia-yuan-yuan-2016}
G.~Grapiglia, J.~Yuan, and Y.~Yuan.
\newblock \doilink{10.1007/s10957-016-1007-x}{Nonlinear stepsize control
  algorithms: Complexity bounds for first- and second-order optimality}.
\newblock \emph{J. Optim. Theory and Applics.}, \penalty0 (171):\penalty0
  980--–997, 2016.

\bibitem[Lee et~al.(2014)Lee, Sun, and Saunders]{lee-sun-saunders-2014}
J.~D. Lee, Y.~Sun, and M.~A. Saunders.
\newblock \doilink{10.1137/130921428}{Proximal {N}ewton-type methods for
  minimizing composite functions}.
\newblock \emph{SIAM J. Optim.}, 24\penalty0 (3):\penalty0 1420--1443, 2014.

\bibitem[Levenberg(1944)]{levenberg-1944}
K.~Levenberg.
\newblock \doilink{10.1090/qam/10666}{A method for the solution of certain
  problems in least squares}.
\newblock \emph{Q. Appl. Math.}, \penalty0 (2):\penalty0 164--168, 1944.

\bibitem[Li and Lin(2015)]{li-lin-2015}
H.~Li and Z.~Lin.
\newblock
  \href{http://irc.cs.sdu.edu.cn/973project/result/download/2015/28.AcceleratedProximal.pdf}{Accelerated
  proximal gradient methods for nonconvex programming}.
\newblock In \emph{Proceedings of the 28th International Conference on Neural
  Information Processing Systems - Volume 1}, NIPS'15, pages 379--387,
  Cambridge, MA, USA, 2015. MIT Press.

\bibitem[Marquardt(1963)]{marquardt-1963}
D.~W. Marquardt.
\newblock \doilink{10.1137/0111030}{An algorithm for least-squares estimation
  of nonlinear parameters}.
\newblock \emph{Journal of the Society for Industrial and Applied Mathematics},
  11\penalty0 (2):\penalty0 431--441, 1963.

\bibitem[Nagumo et~al.(1962)Nagumo, Arimoto, and Yoshizawa]{nagumo1962active}
J.~Nagumo, S.~Arimoto, and S.~Yoshizawa.
\newblock \doilink{10.1109/JRPROC.1962.288235}{An active pulse transmission
  line simulating nerve axon}.
\newblock \emph{Proceedings of the IRE}, 50\penalty0 (10):\penalty0 2061--2070,
  1962.

\bibitem[Revels(2022)]{reversediff}
J.~Revels.
\newblock Reverse mode automatic differentiation for {J}ulia.
\newblock \url{https://github.com/JuliaDiff/ReverseDiff.jl}, 2022.

\bibitem[Revels et~al.(2016)Revels, Lubin, and Papamarkou]{forwarddiff}
J.~Revels, M.~Lubin, and T.~Papamarkou.
\newblock \href{https://arxiv.org/abs/1607.07892}{Forward-mode automatic
  differentiation in {J}ulia}, 2016.
\newblock \url{https://arxiv.org/abs/1607.07892}.

\bibitem[Rockafellar and Wets(1998)]{rtrw}
R.~Rockafellar and R.~Wets.
\newblock \doilink{10.1007/978-3-642-02431-3}{\emph{Variational Analysis}},
  volume 317.
\newblock Springer Verlag, 1998.

\bibitem[{Stella} et~al.(2017){Stella}, {Themelis}, {Sopasakis}, and
  {Patrinos}]{stella-themelis-sopasakis-patrinos-2017}
L.~{Stella}, A.~{Themelis}, P.~{Sopasakis}, and P.~{Patrinos}.
\newblock \doilink{10.1109/CDC.2017.8263933}{A simple and efficient algorithm
  for nonlinear model predictive control}.
\newblock In \emph{2017 IEEE 56th Annual Conference on Decision and Control
  (CDC)}, pages 1939--1944, 2017.

\bibitem[Themelis et~al.(2018)Themelis, Stella, and
  Patrinos]{themelis-stella-patrinos-2017}
A.~Themelis, L.~Stella, and P.~Patrinos.
\newblock \doilink{10.1137/16M1080240}{Forward-backward envelope for the sum of
  two nonconvex functions: Further properties and nonmonotone linesearch
  algorithms}.
\newblock \emph{SIAM J. Optim.}, 28\penalty0 (3):\penalty0 2274--2303, 2018.

\bibitem[Tibshirani(1996)]{tibshirani1996regression}
R.~Tibshirani.
\newblock \doilink{10.1111/j.2517-6161.1996.tb02080.x}{Regression shrinkage and
  selection via the lasso}.
\newblock \emph{J. Roy. Statist. Soc. Ser. B}, 58\penalty0 (1):\penalty0
  267--288, 1996.

\bibitem[Van~der Pol(1926)]{van1926lxxxviii}
B.~Van~der Pol.
\newblock \doilink{10.1080/14786442608564127}{Lxxxviii. {O}n
  “relaxation-oscillations”}.
\newblock \emph{The London, Edinburgh, and Dublin Philosophical Magazine and
  Journal of Science}, 2\penalty0 (11):\penalty0 978--992, 1926.

\bibitem[Zhu et~al.(2011)Zhu, Leus, and Giannakis]{zhu-leus-giannakis-2011}
H.~Zhu, G.~Leus, and G.~B. Giannakis.
\newblock \doilink{10.1109/TSP.2011.2109956}{Sparsity-cognizant total
  least-squares for perturbed compressive sampling}.
\newblock \emph{IEEE T. Signal Proces.}, 59\penalty0 (5):\penalty0 2002--2016,
  2011.

\end{thebibliography}

  % \clearpage
  % \tableofcontents
  % \listoftodos\relax

\end{document}